\documentclass[11pt,a4paper]{article}

\usepackage[T1]{fontenc}
\usepackage[utf8]{inputenc}
\usepackage{lmodern}
\usepackage{amsmath,amssymb,amsthm}
\usepackage{xcolor}
\usepackage{microtype}
\usepackage[colorlinks=true,linkcolor=blue,urlcolor=blue,citecolor=blue]{hyperref}
\hypersetup{
  pdftitle={Formalizing Singer Sidon Constructions and Sidon Set Infrastructure in Lean 4},
  pdfauthor={David B. Hulak, Arthur F. Ramos, Ruy J. G. B. de Queiroz},
  pdfsubject={Formalized Mathematics, Additive Combinatorics},
  pdfkeywords={Singer theorem, Sidon sets, Lean 4, formalization, additive combinatorics}
}
\usepackage{mathrsfs}
\usepackage{enumitem}
\usepackage{booktabs}
\usepackage{graphicx}
\usepackage{array}
\usepackage{float}

\usepackage{listings}
\usepackage{upquote}
\lstdefinelanguage{Lean4}{
  morekeywords={theorem,lemma,def,noncomputable,private,set_option,
    import,namespace,end,open,variable,instance,where,by,exact,intro,
    refine,rcases,obtain,have,calc,rw,simp,linarith,omega,ring,
    apply,constructor,cases,induction,with,fun,let,set,show,
    suffices,match,if,then,else,return,do,sorry},
  sensitive=true,
  morecomment=[l]{--},
  morecomment=[n]{/-}{-/},
  morestring=[b]",
  literate={→}{$\to$}1 {←}{$\leftarrow$}1 {↦}{$\mapsto$}1
           {≤}{$\leq$}1 {≥}{$\geq$}1 {≠}{$\neq$}1 {∈}{$\in$}1
           {∉}{$\notin$}1 {⊆}{$\subseteq$}1 {⊓}{$\sqcap$}1
           {⊔}{$\sqcup$}1 {⊥}{$\bot$}1 {⊤}{$\top$}1
           {ℤ}{$\mathbb{Z}$}1 {ℕ}{$\mathbb{N}$}1 {ℝ}{$\mathbb{R}$}1
           {×}{$\times$}1 {⟨}{$\langle$}1 {⟩}{$\rangle$}1
           {∀}{$\forall$}1 {∃}{$\exists$}1 {¬}{$\lnot$}1
           {∧}{$\land$}1 {∨}{$\lor$}1 {⧸}{/}1,
}
\newtheorem{theorem}{Theorem}[section]
\newtheorem{lemma}[theorem]{Lemma}

\newtheorem{corollary}[theorem]{Corollary}
\newtheorem{definition}[theorem]{Definition}
\newtheorem{remark}[theorem]{Remark}

\newcommand{\LeanBreakTwo}[2]{%
  \begin{tabular}[t]{@{}l@{}}\texttt{#1}\\\texttt{#2}\end{tabular}}
\newcommand{\LeanBreakThree}[3]{%
  \begin{tabular}[t]{@{}l@{}}\texttt{#1}\\\texttt{#2}\\\texttt{#3}\end{tabular}}

\title{Formalizing Singer Sidon Constructions\\
and Sidon Set Infrastructure in Lean~4}

\author{David B.~Hulak\thanks{Corresponding author.}\\
\small Independent Researcher\\
\small \texttt{dbhulak@gmail.com}\\
\small \url{https://orcid.org/0009-0002-8056-1774}
\and
Arthur F.~Ramos\\
\small Microsoft\\
\small \texttt{arfreita@microsoft.com}\\
\small \url{https://orcid.org/0009-0003-3568-0325}
\and
Ruy J.~G.~B.~de~Queiroz\\
\small Centro de Inform\'atica, Universidade Federal de Pernambuco\\
\small \texttt{ruy@cin.ufpe.br}\\
\small \url{https://orcid.org/0000-0003-1482-0977}}

\date{}

\begin{document}

\maketitle

\begin{abstract}
Erd\H{o}s Problem~30 asks for sharp asymptotics of the Sidon extremal
function~$h(N)$, and Singer's construction is the classical source of
lower-bound examples matching the main term.  We present a Lean~4 formalization of Singer's
Sidon set construction, together with reusable Sidon-set
infrastructure for additive combinatorics.  For every prime power~$q=p^k$, we prove the existence of a Sidon set
modulo~$q^2+q+1$ of cardinality~$q+1$; the prime-field case \(q=p\) is the
base presentation.  The proof proceeds through a
non-trivial algebraic chain:
construction of the base field and its degree-three extension, analysis of the trace
kernel as a 2-dimensional subspace over the base field, a geometric argument via subspace
intersections establishing the multiplicative Sidon property in the quotient
group, and a transfer from quotient multiplication to modular integer addition.
Around this central result, we develop a reusable Sidon set
library for additive combinatorics.  It comprises interval Sidon sets, modular
Sidon sets, the extremal function~$h(N)$, Lindstr\"om's cross-difference
inequality, a Johnson-route shift-incidence upper bound of the form
$h(N) \le \sqrt N + N^{1/4} + O(1)$, exact
representation-function identities, and unconditional two-sided
\(h(N)=\Theta(\sqrt N)\) bounds with exact floor-rounded finite statements for
\(N \ge 5\).  We further formalize a conditional reduction:
subpolynomial prime gaps together with a full subpolynomial upper-error
hypothesis for \(h(N)\) imply the Erd\H{o}s Problem~30 estimate
$h(N) = \sqrt{N} + O_\varepsilon(N^\varepsilon)$ for every
$\varepsilon>0$.  The Singer/Sidon modules and transfer lemmas
comprise 7\,541 lines of Lean~4 with zero active uses of \texttt{sorry}.
We describe the
mathematical lessons learned, focusing on how formalization clarifies the precise
scope of classical arguments and forces explicit treatment of the
passage from the field-theoretic construction to integer Sidon predicates.
\end{abstract}

\smallskip\noindent\textbf{2020 Mathematics Subject Classification:}
05B10, 11B75, 68V20.\\
\textbf{Keywords:} Singer theorem, Sidon sets, Lean~4, formalization,
additive combinatorics, difference sets.

\section{Introduction}
\label{sec:intro}

\subsection{Erd\H{o}s Problem 30}

Let $h(N) = \max\{|A| : A \subseteq \{1,\dots,N\} \text{ is Sidon}\}$, where a
set~$A$ of integers is \emph{Sidon} (or a $B_2$~set) if all pairwise sums
$a + b$ with $a \le b$ (including $a=b$) and $a, b \in A$ are distinct.
Lindstr\"om~\cite{Lindstrom1969}
established the upper bound
\[
  h(N) \le \sqrt{N} + N^{1/4} + 1,
\]
sharpening the earlier Erd\H{o}s--Tur\'an~\cite{ET1941} bound.  Later
work~\cite{BaloghFurediRoy2023,CarterHunterOBryant2025} improves the constant
in the \(N^{1/4}\) term but not the exponent.  On the lower side, Singer's
construction and prime-finding results give
\(h(N)\ge \sqrt N-O(N^{\theta_{\mathrm{gap}}})\).  Thus an upper error
\(O(N^{\theta_{\mathrm{up}}})\) and this Singer--prime lower error combine to
give \(h(N)=\sqrt N+O(N^{\max(\theta_{\mathrm{up}},\theta_{\mathrm{gap}})})\).
Here \(\theta_{\mathrm{gap}}\) denotes the prime-gap error exponent and
\(\theta_{\mathrm{up}}\) the upper-bound error exponent for \(h(N)\).
Erd\H{o}s Problem~30~\cite{Erdos1957,Guy2004} asks whether this can be improved to
$h(N) = \sqrt{N} + O_\varepsilon(N^\varepsilon)$
for every $\varepsilon > 0$.  This remains open.

\subsection{Why formalize?}

Formalization of classical combinatorial number theory serves three purposes
beyond mere verification:
\begin{enumerate}[nosep]
  \item \textbf{Scope discipline}: the type system forces the formalizer to
    confront exactly which hypotheses each theorem requires.  For Singer's
    construction, the development separates the prime-field presentation from
    the prime-power theorem, where the base field is \texttt{GaloisField p k}
    and the degree-three extension is supplied by Mathlib's finite-field
    extension API.
  \item \textbf{Field-to-integer transfer}: the passage from algebraic structures
    (quotient groups, field extensions) to combinatorial predicates (modular
    Sidon, interval containment) requires explicit definitions and transfer
    lemmas that informal proofs often elide.
  \item \textbf{Conditional dependencies}: formal verification makes the logical
    dependencies between open conjectures (prime gaps, upper bounds) and the
    target statement (Erd\H{o}s 30) machine-checkable.
\end{enumerate}

\subsection{Contributions}

Our main contributions are:
\begin{enumerate}[nosep]
  \item A Singer formalization presented through explicit
    Sidon-set predicates ($\S$\ref{sec:singer}), with the algebraic chain
    (trace-kernel proof, no-invariant-subspace argument, Grassmann
    intersection, projective-line representatives) and the transfer through
    the modular Sidon predicate fully unfolded; this yields the prime-field
    theorem and the prime-power theorem giving Sidon sets of size~$q+1$ modulo
    \(q^2+q+1\) for every prime power \(q=p^k\).
  \item A reusable Sidon set library ($\S$\ref{sec:sidon-infra}) with interval
    and modular variants, the extremal function~$h(N)$, and modular-to-interval
    transfer machinery.
  \item Unconditional bounds on~$h(N)$ ($\S$\ref{sec:bounds}), including
    $h(N) = \Theta(\sqrt{N})$ and a formalized Johnson-route upper bound
    $h(N) \le \lfloor\sqrt{N}\rfloor+\lfloor\sqrt{\lfloor\sqrt{N}\rfloor}\rfloor+2$ for $N\ge16$,
    matching Lindstr\"om's $\sqrt N + N^{1/4}+O(1)$ leading order.
    This yields two separate consequences: the one-sided upper-bound half of
    Erd\H{o}s~30 for $\varepsilon\ge1/4$, and, from the coarser two-sided
    bounds, the full two-sided estimate for the coarse range
    $\varepsilon \ge 1/2$.
  \item A conditional reduction ($\S$\ref{sec:conditional}) whose mathematical
    content is the prime-gap-driven lower-bound transfer
    \texttt{sidonLowerBound\_\allowbreak{}of\_\allowbreak{}singer\_\allowbreak{}and\_\allowbreak{}gap},
    including the threshold arithmetic that turns subpolynomial gaps into a
    Sidon lower error of the same exponent; the final assembly
    \texttt{erdos30\_\allowbreak{}of\_\allowbreak{}gap\_\allowbreak{}and\_\allowbreak{}upper}
    packages this with the upper-bound hypothesis as a clean corollary.
  \item Mathematical lessons ($\S$\ref{sec:lessons}) on how formalization
    reshapes the classical Singer argument and exposes hidden dependencies in
    the field-to-integer transfer.
\end{enumerate}

\subsection{Scope and limitations}

We separate deliberate modeling choices from practical limitations.

\smallskip\noindent\textbf{Modeling choices} (deliberate scope decisions that do not weaken
stated results):
\begin{itemize}[nosep]
  \item Singer's original theorem~\cite{Singer1938} applies to all prime
    powers~$q$.  We now formalize both the prime case \(q=p\), using Lean's
    \texttt{GaloisField p 3} type, and the prime-power case \(q=p^k\), using
    \texttt{GaloisField p k} as the base field and a degree-three finite-field
    extension over it.
  \item The conditional Erd\H{o}s~30 reduction relies on a subpolynomial prime
    gap hypothesis (implied by Cram\'er's conjecture~\cite{Cramer1936} but weaker) which
    remains open.  This is inherent: no unconditional proof of Erd\H{o}s~30
    is known.
\end{itemize}

\smallskip\noindent\textbf{Practical limitations} (results not provided by this paper):
\begin{itemize}[nosep]
  \item The prime-power Singer theorem does not close Erd\H{o}s~30.  It
    discharges the algebraic Singer-family input at prime powers, but the
    subpolynomial prime-gap and upper-bound hypotheses in the full conditional
    reduction remain open.
\end{itemize}

\subsection{Development overview}

The files listed in Table~\ref{tab:modules} comprise 7\,541 lines of
Lean~4.  The build uses toolchain \texttt{leanprover/lean4:v4.29.0}
and the Mathlib commit
\begin{center}
\texttt{8a178386ffc0f5fef0b77738bb5449d50efeea95}.
\end{center}
This Mathlib commit bumps the Mathlib toolchain to Lean v4.29.0 and, together with the
toolchain pin, fully determines the build environment.  Those files sit inside a larger Erd\H{o}s Problem~30 repository,
which also contains boundary-analysis modules, computational verification
modules (using \texttt{native\_decide}), and experimental proof paths.  The
listed Singer/Sidon files reported in this paper use no \texttt{sorry} and no
\texttt{native\_decide}.  For the declarations listed in
\texttt{AxiomCheck.lean}, the axiom audit reports only Lean's standard
foundations (\texttt{propext}, \texttt{Quot.sound},
\texttt{Classical.choice}).  The use of classical logic is inherent: the
extremal function~$h(N)$ is defined via a non-constructive maximum, and the
Galois field construction requires classical existence of irreducible
polynomials.

\begin{table}[ht]
\centering
\caption{Modules listed for the Singer/Sidon formalization.}
\label{tab:modules}
\small
\resizebox{\textwidth}{!}{%
\begin{tabular}{@{}llr@{}}
\toprule
Module & Content & Lines \\
\midrule
\texttt{Sidon.lean} & Sidon predicate, basic properties & 85 \\
\texttt{Interval.lean} & Interval/modular Sidon, transfer & 526 \\
\texttt{CyclicWindow.lean} & Cyclic-window relabeling & 177 \\
\texttt{Averaging.lean} & Window averaging lemma & 183 \\
\texttt{GapBound.lean} & Distinct-gap lower bound & 192 \\
\texttt{SidonGap.lean} & Sidon gap distinctness and quantitative transfer & 1395 \\
\texttt{PrimePowerTransfer.lean} & Singer/Bose--Chowla parameter wrappers & 420 \\
\texttt{PrimeGapTransfer.lean} & Bertrand and prime-gap transfer & 536 \\
\texttt{FormalStatement.lean} & $h(N)$, Erd\H{o}s~30 statement & 171 \\
\texttt{Singer.lean} & Trace kernel, no invariant subspace & 149 \\
\texttt{SingerBridge.lean} & Scaled submodules, index & 159 \\
\texttt{SingerSidon.lean} & Quotient-group Sidon property & 129 \\
\texttt{SingerTheorem.lean} & Quotient-to-modular transfer & 316 \\
\texttt{SingerPrimePowerCore.lean} & Prime-power finite-field setup & 67 \\
\texttt{SingerPrimePowerAlgebra.lean} & Prime-power trace-kernel algebra & 118 \\
\texttt{SingerPrimePowerBridge.lean} & Prime-power quotient bridge & 112 \\
\texttt{SingerPrimePowerUnits.lean} & Prime-power unit quotient & 95 \\
\texttt{SingerPrimePowerSidon.lean} & Prime-power quotient Sidon property & 143 \\
\texttt{SingerPrimePowerTheorem.lean} & Prime-power theorem discharge & 349 \\
\texttt{SingerPrimePowerSmokeTest.lean} & \(q=4\) smoke test & 25 \\
\texttt{PrimePowerLowerBound.lean} & Prime-power lower-bound reduction & 250 \\
\texttt{Lindstrom.lean} & Cross-difference inequality & 208 \\
\texttt{LindstromImproved.lean} & Johnson/Lindstr\"om upper bound & 348 \\
\texttt{RepresentationFunction.lean} & Pair-sum representation bound & 38 \\
\texttt{RepresentationL2.lean} & L$^1$/L$^2$ representation identities & 149 \\
\texttt{AdditiveEnergy.lean} & Exact Sidon additive energy & 152 \\
\texttt{DifferenceSet.lean} & Exact difference-set cardinality & 88 \\
\texttt{SumsetCard.lean} & Exact sumset cardinality & 145 \\
\texttt{SidonCharacterization.lean} & Sumset-cardinality characterization & 72 \\
\texttt{SumDiffComparison.lean} & Sum/difference comparison & 68 \\
\texttt{UnconditionalBounds.lean} & Two-sided bounds, partial E30 & 355 \\
\texttt{ConditionalLowerBound.lean} & Prime gap reduction & 205 \\
\texttt{ConditionalErdos30.lean} & Discharged conditional E30 & 22 \\
\texttt{FinalTheoremSurface.lean} & Assembly & 94 \\
\midrule
\textbf{Total (listed files)} & & $7\,541$ \\
\bottomrule
\end{tabular}
}
\end{table}

The line counts in Table~\ref{tab:modules} are physical source lines
(including comments and blank lines), computed at the cited artifact commit by
running \texttt{wc -l} on exactly the listed files.  The table is a
manuscript-facing listed-file inventory, not a transitive-import closure; for example,
\texttt{FinalTheoremSurface.lean} also imports a boundary-certificate module
used for repository-status bookkeeping, outside the mathematical dependency
chain summarized here.

\clearpage

\section{Mathematical Background}
\label{sec:background}

\subsection{Sidon sets}

A set $A \subseteq \mathbb{Z}$ is \emph{Sidon} if whenever $a + b = c + d$ with
$a, b, c, d \in A$ and $a \le b$, $c \le d$, then $\{a,b\} = \{c,d\}$.
Equivalently, one may omit the ordering convention and allow the two possible
ordered identifications; the Lean predicate uses this unordered collision form.
For a positive integer~$M$, we say $A \subseteq \mathbb{Z}$ is \emph{Sidon modulo~$M$} if
$M \mid (a + b) - (c + d)$ with $a,b,c,d \in A$ implies
$\{a,b\} = \{c,d\}$ as unordered pairs of integers.  Modular Sidon immediately implies
ordinary Sidon: if $a + b = c + d$ as integers, then
$M \mid (a+b)-(c+d) = 0$, so the modular condition gives
$\{a,b\} = \{c,d\}$.  The converse requires a bounded embedding:

\begin{lemma}[No-wraparound, \texttt{IsSidon.isSidonMod\_of\_interval}]
If $A$ is Sidon with $A \subseteq \{1,\dots,N\}$ and $M \ge 2N - 1$, then
$A$ is Sidon modulo~$M$.
\end{lemma}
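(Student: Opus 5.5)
The plan is to reduce the modular collision to an integer collision by a size argument, then invoke the ordinary Sidon property. Take $a,b,c,d \in A$ with $M \mid (a+b)-(c+d)$. Since every element lies in $\{1,\dots,N\}$, both sums $a+b$ and $c+d$ lie in $\{2,\dots,2N\}$. Hence the difference $D := (a+b)-(c+d)$ satisfies $|D| \le 2N-2$.

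Next I show $D=0$. Since $M \ge 2N-1 > 2N-2 \ge |D|$, the only multiple of $M$ in the interval $[-(2N-2),\,2N-2]$ is $0$. So $M \mid D$ forces $D=0$, that is, $a+b=c+d$ as integers. In Lean this is the usual lemma that $|D|<M$ and $M\mid D$ imply $D=0$ (e.g.\ via \texttt{Int.eq\_zero\_of\_abs\_lt\_dvd}). It needs $M>0$, which follows from $M \ge 2N-1 \ge 1$ once $N\ge1$.

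The degenerate case $N=0$ I will treat separately. There $A=\emptyset$, so the modular condition is vacuous, and we only need $M$ to be a positive integer as in the definition.

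Finally, I apply the hypothesis that $A$ is Sidon to the integer equation $a+b=c+d$. This gives $\{a,b\}=\{c,d\}$ as unordered pairs. That is exactly the conclusion demanded by the modular Sidon predicate, since both predicates use the same unordered collision form.

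The main obstacle is bookkeeping rather than mathematics. I need the membership hypotheses to yield the integer bounds $1\le a\le N$ etc.\ in $\mathbb{Z}$ (with casts if $N$ and $M$ are natural numbers). I also need the strict inequality $|D|<M$ in the right form for the divisibility-to-zero lemma. I would first state and prove a small helper: for $x,y\in[2,2N]$ and $M\ge 2N-1$, $M\mid x-y$ implies $x=y$. After that, the main proof is a direct composition with the ordinary Sidon predicate.
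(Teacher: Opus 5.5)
Your proposal is correct and matches the paper's argument: the difference of pair sums lies in $\{-(2N-2),\dots,2N-2\}\subset(-M,M)$, so $M$-divisibility forces it to be zero, and the ordinary Sidon property then gives $\{a,b\}=\{c,d\}$. Your separate treatment of $N=0$ is harmless but unnecessary, since the existence of any $a\in A$ already forces $N\ge1$ and hence $M\ge1$.
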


\noindent The bound ensures all pairwise-sum differences lie in
$\{-(2N-2),\dots,2N-2\} \subset (-M,M)$, so $M$-divisibility forces the
difference to be zero, reducing to an ordinary collision.

\subsection{Singer's construction}

A \emph{$(v,k,\lambda)$-difference set} is a subset $D$ of
$\mathbb{Z}/v\mathbb{Z}$ with $|D| = k$ such that every nonzero residue is
representable as exactly~$\lambda$ ordered differences $d_i - d_j$; a $(v,k,1)$-set
is called \emph{perfect}.  The $\lambda = 1$ condition implies the Sidon
property in the cyclic group (at most one representation per difference);
for Singer parameters, where $k(k-1) = v-1$, the converse also holds.

Singer~\cite{Singer1938} proved that for every prime power~$q$, the quotient
$\mathrm{GF}(q^3)^\times / \mathrm{GF}(q)^\times$ is cyclic of order
$q^2 + q + 1$, and the image of the nonzero trace-kernel elements
$\ker(\mathrm{Tr}_{\mathrm{GF}(q^3)/\mathrm{GF}(q)}) \setminus \{0\}$
under the quotient map forms a $(q^2+q+1, q+1, 1)$
perfect difference set of size $q + 1$ in $\mathbb{Z}/(q^2+q+1)\mathbb{Z}$.
Indeed, the trace kernel has \(q^2\) elements, so its \(q^2-1\) nonzero
elements fall into \((q^2-1)/(q-1)=q+1\) scalar orbits.
The formalization now covers this prime-power scope: the prime presentation uses
\(\mathrm{GF}(p^3)\) over \(\mathrm{GF}(p)\), while the prime-power theorem uses
\(\texttt{GaloisField p k}\) as the base field and a degree-three finite-field
extension over that base.

The Sidon property rests on a geometric fact.  Let
$V = \ker(\mathrm{Tr}_{\mathrm{GF}(q^3)/\mathrm{GF}(q)})$, a 2-dimensional
$\mathrm{GF}(q)$-subspace (by rank-nullity applied to the surjective trace).
For an element \(\alpha\) of the extension not lying in the embedded base field:
\begin{enumerate}[nosep]
  \item The scaled subspace $\alpha V$ differs from~$V$: if $\alpha V = V$,
    then $V$ would be a proper $\alpha$-invariant submodule of
    $\mathrm{GF}(q^3)$, contradicting the no-invariant-subspace theorem.
  \item Two distinct 2-dimensional subspaces of a 3-dimensional space intersect
    in dimension~1 (Grassmann formula).
  \item Therefore $V \cap \alpha^{-1}V$ is 1-dimensional for such
    \(\alpha\in \mathrm{GF}(q^3)\setminus\mathrm{GF}(q)\).
\end{enumerate}
This forces any ``product collision'' in the quotient (i.e., $[u]\cdot[v] =
[w]\cdot[x]$) to degenerate into a pair equality, establishing the
multiplicative Sidon property that is then transferred to additive modular
arithmetic via a cyclic quotient group isomorphism (the abstract
discrete-logarithm map).

\subsection{Known bounds on \texorpdfstring{$h(N)$}{h(N)}}

The pair-difference counting argument gives $h(N) \le \sqrt{2N} + 1$.
Lindstr\"om's cross-difference inequality~\cite{Lindstrom1969} sharpens this to
$h(N) \le \sqrt{N} + N^{1/4} + 1$.  Balogh--F\"uredi--Roy~\cite{BaloghFurediRoy2023}
and Carter--Hunter--O'Bryant~\cite{CarterHunterOBryant2025} subsequently
improved the constant in the \(N^{1/4}\) term, the latter giving
\(h(N)\le N^{1/2}+0.98183N^{1/4}+O(1)\); these refinements do not change the
exponent-level reduction formalized here.  For a broader annotated guide to
the Sidon-sequence literature, see O'Bryant~\cite{OBryant2004}.  For the lower
bound, Singer sets transferred to intervals via prime-finding results give
$h(N) \ge \sqrt{N} - O(N^\theta)$ where $\theta$ depends on the available
prime gap theorem (Bertrand gives $\theta = 1/2$;
Baker--Harman--Pintz~\cite{BHP2001} gives $\theta = 0.2625$).
Bose--Chowla~\cite{BoseChowla1962} is another classical algebraic source of
finite Sidon sets; in the present development it appears at the
parameter-wrapper and transfer-lemma level, with Singer's theorem supplying
the construction theorem used for the formal lower-bound derivation.

\section{The Singer Formalization}
\label{sec:singer}

\subsection{Trace-kernel algebra over finite-field extensions}

The algebraic foundation consists of three theorems about a degree-three finite
extension viewed as a 3-dimensional vector space over its base field.  In the
prime-field presentation this is \(\mathrm{GF}(p^3)\) over \(\mathrm{GF}(p)\);
in the prime-power theorem it is the extension \(L\) of
\(\texttt{GaloisField p k}\) supplied by \texttt{FiniteField.Extension}.
Throughout the displayed prime-field declarations below \(p\) is prime; the
prime-power declarations add a positive exponent \(k\).

\begin{theorem}[Trace kernel dimension]
\label{thm:finrank-ker}
$\dim_{\mathrm{GF}(p)} \ker(\mathrm{Tr}_{\mathrm{GF}(p^3)/\mathrm{GF}(p)}) = 2$.
\end{theorem}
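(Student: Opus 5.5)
The plan is to apply rank--nullity to the trace map
\(\mathrm{Tr}\colon \mathrm{GF}(p^3)\to\mathrm{GF}(p)\), viewed as a \(\mathrm{GF}(p)\)-linear map.
First I will record that \(\dim_{\mathrm{GF}(p)}\mathrm{GF}(p^3)=3\).
In Lean this comes from the finrank lemma for \texttt{GaloisField p 3} over \texttt{ZMod p}, using \(3\neq 0\).
Second, I will show the trace is surjective.
Since the codomain has dimension \(1\), the range is either \(0\) or everything.
So it suffices to show that the trace is not identically zero.
Rank--nullity then gives \(\dim\ker = 3-1 = 2\).

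The main obstacle is the nonvanishing of the trace.
I would try the general fact that for a finite separable extension the trace form is nondegenerate.
Extensions of finite fields are separable (perfect base field), so the trace cannot vanish identically.
In Mathlib this is available as \texttt{Algebra.trace\_ne\_zero} or \texttt{Algebra.traceForm\_nondegenerate} under \texttt{Algebra.IsSeparable}, which is an instance for finite fields.

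If that route is awkward, there is an elementary backup.
The trace equals \(x+x^p+x^{p^2}\), since the Galois group is generated by Frobenius.
This is a nonzero polynomial of degree \(p^2<p^3\), so it cannot vanish on all \(p^3\) elements.

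Finally, I will assemble the pieces.
Surjectivity follows from \(\mathrm{Tr}\neq 0\) together with \(\dim\mathrm{GF}(p)=1\), e.g.\ via \texttt{LinearMap.range\_eq\_top} and the fact that a nonzero submodule of a one-dimensional space is \(\top\).
I will then rewrite \texttt{LinearMap.finrank\_range\_add\_finrank\_ker} to get \(1+\dim\ker=3\) and conclude with \texttt{omega}.

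The same argument carries over verbatim to the prime-power setting.
There the base field is \texttt{GaloisField p k} and the extension \(L\) has degree \(3\) over it.
Only the finrank input changes, and it comes from the \texttt{FiniteField.Extension} API.
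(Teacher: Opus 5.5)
Your proposal is correct and follows essentially the same route as the paper, which obtains $\dim\ker(\mathrm{Tr})=2$ from rank--nullity applied to the surjective $\mathrm{GF}(p)$-linear trace $\mathrm{GF}(p^3)\to\mathrm{GF}(p)$. Your justification of surjectivity (nonvanishing of the trace via separability, or the Frobenius-polynomial degree count) fills in a step the paper only asserts.
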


\begin{lstlisting}
theorem finrank_ker_trace :
    finrank (ZMod p) (Algebra.trace (ZMod p) (GaloisField p 3)).ker = 2
\end{lstlisting}

\begin{theorem}[No proper invariant subspace]
\label{thm:no-invariant}
For $\alpha \notin \mathrm{GF}(p)$, multiplication by~$\alpha$ preserves no
proper non-trivial $\mathrm{GF}(p)$-submodule of\/~$\mathrm{GF}(p^3)$.
\end{theorem}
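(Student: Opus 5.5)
The plan is to argue via the subring generated by $\alpha$, using that the extension has prime degree $3$ over the base field. Let $W \subseteq \mathrm{GF}(p^3)$ be a $\mathrm{GF}(p)$-submodule with $W \neq 0$ and $W \neq \mathrm{GF}(p^3)$, and suppose for contradiction that $\alpha W \subseteq W$ (equivalently $\alpha W = W$, since multiplication by the nonzero $\alpha$ is injective and $W$ is finite-dimensional).

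First, invariance under $\alpha$ implies invariance under every element of $K := \mathrm{GF}(p)[\alpha]$. Such an element is a polynomial $f(\alpha)$ with coefficients in $\mathrm{GF}(p)$, and $W$ is closed under $\mathrm{GF}(p)$-scalars, sums, and multiplication by $\alpha$. In Lean I would phrase this as: the set $\{x : x W \subseteq W\}$ is a $\mathrm{GF}(p)$-subalgebra containing $\alpha$, hence contains $\mathrm{Algebra.adjoin}\,\mathrm{GF}(p)\,\{\alpha\}$. The subring $K$ is a finite integral domain, so it is a subfield, namely the intermediate field $\mathrm{GF}(p)(\alpha)$. Its degree $[K:\mathrm{GF}(p)]$ divides $3$ by the tower law. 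Since $\alpha \notin \mathrm{GF}(p)$, the degree is not $1$, so it is $3$ and $K = \mathrm{GF}(p^3)$.

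Hence $W$ is stable under multiplication by all of $\mathrm{GF}(p^3)$, i.e.\ $W$ is an ideal of the field $\mathrm{GF}(p^3)$. A field has only the ideals $0$ and the whole ring. Concretely, pick $0 \neq w \in W$; for any $x$ we have $x = (x w^{-1}) w \in W$, so $W = \mathrm{GF}(p^3)$. This contradicts properness.

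I expect the main obstacle to be the degree step. One has to show that $\mathrm{GF}(p)(\alpha)$ is all of $\mathrm{GF}(p^3)$ by combining $\mathrm{finrank} = 3$ for \texttt{GaloisField p 3}, the tower law (\texttt{IntermediateField} finrank divisibility), primality of $3$, and the fact that degree $1$ forces $\alpha$ into the image of $\mathrm{GF}(p)$. There is a fallback if this bookkeeping gets awkward. By Theorem~\ref{thm:finrank-ker}-style dimension counting, $W$ has dimension $1$ or $2$. In dimension $1$, $W = \mathrm{GF}(p)w$ and $\alpha w = c w$ gives $\alpha = c \in \mathrm{GF}(p)$ directly. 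In dimension $2$, the characteristic polynomial of multiplication by $\alpha$ would then have a factor of degree at most $2$, contradicting that the minimal polynomial of $\alpha$ is irreducible of degree $3$. I would try the intermediate-field route first.
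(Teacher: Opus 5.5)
Your argument is correct. It shares its key input with the paper's proof: $\alpha \notin \mathrm{GF}(p)$ generates all of $\mathrm{GF}(p^3)$ because $3$ is prime. Where it differs is in how the contradiction is drawn.

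\textbf{The paper's route.} It picks $v \ne 0$ in the invariant subspace $V$ and notes that $v, \alpha v, \alpha^2 v$ all lie in $V$. It then shows these three vectors are linearly independent. A relation $c_0 v + c_1\alpha v + c_2\alpha^2 v = 0$ factors as $v(c_0+c_1\alpha+c_2\alpha^2)=0$. That gives a nonzero polynomial of degree $\le 2$ vanishing at $\alpha$, against $\deg \mathrm{minpoly}(\alpha) = 3$. Hence $\dim V \ge 3$, contradicting properness.

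\textbf{Your route.} You pass to the stabilizer subalgebra $\{x : xW \subseteq W\}$, note that it contains $\mathrm{GF}(p)[\alpha] = \mathrm{GF}(p^3)$, and conclude that $W$ is an ideal of a field.

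\textbf{What each buys.} Your route avoids linear-independence and dimension bookkeeping altogether. It also proves something more general: in any finite extension, an $\alpha$-invariant subspace is automatically a $\mathrm{GF}(p)(\alpha)$-subspace. So it needs only $\mathrm{GF}(p)(\alpha) = \top$, not the numerical degree of the minimal polynomial. The paper's route is more hands-on and uses the minimal-polynomial degree directly. It produces a \texttt{finrank} lower bound that sits naturally beside the Grassmann-formula step of Theorem~\ref{thm:intersection}.

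\textbf{On your fallback.} The paper's triple argument handles invariant subspaces of dimension $1$ and $2$ uniformly. It would therefore be a lighter fallback than your characteristic-polynomial case split. Your dimension-$2$ case additionally needs two facts: the block factorization of the characteristic polynomial over an invariant subspace, and the identification of the characteristic polynomial with the irreducible minimal polynomial.
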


The proof constructs the three linearly independent vectors
$\{v, \alpha v, \alpha^2 v\}$ for any $v \ne 0$, using the fact that
$\alpha$ has degree-3 minimal polynomial over~$\mathrm{GF}(p)$ (since
$[\mathrm{GF}(p^3):\mathrm{GF}(p)] = 3$ is prime, there is no intermediate
field, so every $\alpha \notin \mathrm{GF}(p)$ generates the full extension).
If $c_0 v + c_1 \alpha v + c_2 \alpha^2 v = 0$ with $c_i \in \mathrm{GF}(p)$
not all zero, then $v(c_0 + c_1\alpha + c_2\alpha^2) = 0$; since $v \ne 0$,
this gives a nontrivial degree-$\le 2$ relation for~$\alpha$
over $\mathrm{GF}(p)$, contradicting the fact that the minimal polynomial
of~$\alpha$ has degree~$3$.
If a proper submodule~$V$ with $\dim V < 3$ were $\alpha$-invariant, it
would contain these three independent vectors, forcing $\dim V \ge 3$---a
contradiction.

\begin{theorem}[Intersection dimension]
\label{thm:intersection}
Any two distinct 2-dimensional $\mathrm{GF}(p)$-subspaces
of\/~$\mathrm{GF}(p^3)$ intersect in dimension~1.
\end{theorem}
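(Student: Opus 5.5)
The plan is to use the Grassmann dimension formula, together with the fact that $\mathrm{GF}(p^3)$ is $3$-dimensional over $\mathrm{GF}(p)$. Let $U,W$ be distinct $\mathrm{GF}(p)$-submodules of $\mathrm{GF}(p^3)$ with $\dim U=\dim W=2$. Mathlib's \texttt{Submodule.finrank\_sup\_add\_finrank\_inf\_eq} gives
\[
  \dim(U\sqcup W)+\dim(U\sqcap W)=\dim U+\dim W=4 .
\]
So it suffices to show $\dim(U\sqcup W)=3$, after which $\dim(U\sqcap W)=1$ follows by arithmetic.

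First, the upper bound. $U\sqcup W$ is a submodule of the ambient space, so $\dim(U\sqcup W)\le \dim_{\mathrm{GF}(p)}\mathrm{GF}(p^3)=3$. The ambient dimension $3$ comes from \texttt{GaloisField.finrank} (or \texttt{Module.finrank} of the extension in the prime-power variant), and the inequality from \texttt{Submodule.finrank\_le}.

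Second, the lower bound. $U\le U\sqcup W$, so $\dim(U\sqcup W)\ge 2$. Suppose $\dim(U\sqcup W)=2$. Then $U\le U\sqcup W$ with equal finite dimension, so \texttt{Submodule.eq\_of\_le\_of\_finrank\_eq} gives $U=U\sqcup W$. The same argument gives $W=U\sqcup W$. Hence $U=W$, contradicting distinctness. Therefore $\dim(U\sqcup W)=3$, and the formula forces $\dim(U\sqcap W)=1$.

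The mathematics is elementary. The main obstacle I expect is bookkeeping in the formalization: keeping finrank statements in $\mathbb{N}$ so that \texttt{omega} can close the case split $2\le d\le 3$, $d\neq 2$, and making sure the finite-dimensionality instances for the submodules are found automatically. For the prime-power version the argument is identical, with \texttt{GaloisField p k} as base field; only the ambient-dimension lemma changes, to the degree-three statement supplied by the extension API. I would state the lemma once for an arbitrary field $K$ and a $3$-dimensional $K$-space, so that both presentations instantiate it.
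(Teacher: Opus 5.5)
Your proposal is correct and takes the same approach as the paper: apply the Grassmann formula $\dim(U+W)+\dim(U\cap W)=\dim U+\dim W$ and conclude $\dim(U\cap W)=4-3=1$ once $\dim(U+W)=3$. You also spell out the step the paper asserts without justification, namely that $\dim(U+W)\le 3$ by the ambient dimension and that $\dim(U+W)\neq 2$, since otherwise $U=U+W=W$.
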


This follows from the Grassmann formula
$\dim(V + W) + \dim(V \cap W) = \dim V + \dim W$.  Since $V \ne W$ are both
2-dimensional, $\dim(V + W) = 3$, hence $\dim(V \cap W) = 2 + 2 - 3 = 1$.

\subsection{Quotient-group Sidon property}

We define the canonical group homomorphism
$\mathtt{singerMk} : \mathrm{GF}(p^3)^\times \to
 \mathrm{GF}(p^3)^\times / \mathrm{GF}(p)^\times$
and write $[u]$ for the coset $u \cdot \mathrm{GF}(p)^\times$.
Multiplication below denotes field multiplication in $\mathrm{GF}(p^3)$.
We prove:

\begin{theorem}[Singer quotient Sidon]
\label{thm:singer-sidon}
Let $u, v, w, x \in \ker(\mathrm{Tr}) \setminus \{0\}$ and $\alpha \in
\mathrm{GF}(p)^\times$ satisfy $u \cdot v = \alpha \cdot (w \cdot x)$.  Then
either $[u] = [w]$ and $[v] = [x]$, or $[u] = [x]$ and $[v] = [w]$.
\end{theorem}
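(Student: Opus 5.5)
We argue by cases on whether $u$ and $w$ are $\mathrm{GF}(p)$-proportional, using the geometry of Theorems~\ref{thm:no-invariant} and~\ref{thm:intersection} applied to $V=\ker(\mathrm{Tr})$.

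\textbf{Case $[u]=[w]$.} Write $u=\lambda w$ with $\lambda\in\mathrm{GF}(p)^\times$. Cancelling $w\neq 0$ in the field gives $\lambda v=\alpha x$, so $v=(\alpha/\lambda)x$ and $[v]=[x]$. The case $[u]=[x]$ is symmetric: cancel $x$ and conclude $[v]=[w]$.

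\textbf{Remaining case: $[u]\ne[w]$ and $[u]\ne[x]$.} We derive a contradiction. Set $\beta=u/w\in\mathrm{GF}(p^3)^\times$. By assumption $\beta\notin\mathrm{GF}(p)$. The hypothesis rewrites as $\beta v=\alpha x$.

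Consider the element $t=w\in V$. It satisfies $\beta t=u\in V$, so $w\in V\cap\beta^{-1}V$.

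Consider also $s=v\in V$. We have $\beta s=\alpha x\in V$, since $V$ is a $\mathrm{GF}(p)$-subspace. Hence $v\in V\cap\beta^{-1}V$ as well.

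Because $\beta\notin\mathrm{GF}(p)$, Theorem~\ref{thm:no-invariant} gives $\beta^{-1}V\neq V$. Here we use that $\beta^{-1}V$ is again a 2-dimensional subspace, since scaling by a unit is a linear automorphism. Theorem~\ref{thm:intersection} then shows that $V\cap\beta^{-1}V$ is 1-dimensional. Therefore the nonzero vectors $w$ and $v$ are proportional: $v=\mu w$ with $\mu\in\mathrm{GF}(p)^\times$.

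Substituting into $\beta v=\alpha x$ gives $\mu u=\mu\beta w=\alpha x$. Hence $[u]=[x]$, contradicting the case assumption.

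So this case is impossible. In all possible cases, either $[u]=[w]$ and $[v]=[x]$, or $[u]=[x]$ and $[v]=[w]$, as claimed.

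\textbf{Main obstacle.} The principal difficulty, especially in the formalization, is the bookkeeping around the subspaces. One must show that $\beta^{-1}V$ is a genuine submodule of dimension 2; the SingerBridge "scaled submodules" serve this purpose. One must also identify "$\beta\notin\mathrm{GF}(p)$" with the condition that $\beta$ is not in the range of the algebra map, so that Theorem~\ref{thm:no-invariant} applies. Finally, one must turn "1-dimensional" into "any two nonzero elements are scalar multiples," for instance via \texttt{finrank\_eq\_one\_iff}, and then translate proportionality into equality of cosets in the quotient by $\mathrm{GF}(p)^\times$.
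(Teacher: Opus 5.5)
Your proof is correct and follows the paper's argument: with $\beta = u w^{-1} \notin \mathrm{GF}(p)$ you place both $w$ and $v$ in the one-dimensional intersection $V \cap \beta^{-1}V$ using the no-invariant-subspace and intersection-dimension theorems, and conclude $v = \mu w$, exactly as the paper does. The only difference is cosmetic. The paper splits directly on whether $\beta \in \mathrm{GF}(p)$ and, in the non-scalar case, concludes $[v]=[w]$ and $[u]=[x]$ outright, whereas you handle $[u]=[x]$ as a separate case and obtain a contradiction in the remaining one.
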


Here \(\mathrm{GF}(p)^\times\) is viewed as the embedded scalar subgroup of
\(\mathrm{GF}(p^3)^\times\).  Equivalently, equality
\([u][v]=[w][x]\) in the quotient group is the assertion that
\(u\cdot v=\alpha\cdot(w\cdot x)\) for some
\(\alpha\in\mathrm{GF}(p)^\times\); the displayed theorem is this scalar form.

The proof splits on whether $\beta = u \cdot w^{-1}$ lies in~$\mathrm{GF}(p)$:
\begin{itemize}[nosep]
  \item \textbf{Base case} ($\beta \in \mathrm{GF}(p)$): then $[u] = [w]$
    directly from the coset characterization $[a] = [b] \iff a^{-1}b \in
    \mathrm{GF}(p)^\times$.  The multiplicative relation then forces $[v] = [x]$.
  \item \textbf{Non-base case} ($\beta \notin \mathrm{GF}(p)$): the
    intersection $V \cap \beta^{-1}V$ is 1-dimensional
    (Theorems~\ref{thm:no-invariant} and~\ref{thm:intersection}).
    Now $w \in V$ (given) and $\beta w = u \in V$, so $w \in \beta^{-1}V$;
    similarly $v \in V$ and from $u \cdot v = \alpha \cdot w \cdot x$ we
    compute, using \(w\ne0\) and commutativity,
    $\beta v = (u/w) \cdot v = (u \cdot v)/w = \alpha \cdot x \in V$
    (since $\alpha \in \mathrm{GF}(p)^\times$ and $x \in V$, the
    $\mathrm{GF}(p)$-subspace $V$ is closed under scalar multiplication).
    Thus $v \in \beta^{-1}V$.  Hence both~$w$ and~$v$
    lie in $V \cap \beta^{-1}V$, a 1-dimensional space, giving $v = c \cdot w$
    for some $c \in \mathrm{GF}(p)^\times$, hence $[v] = [w]$ and $[u] = [x]$.
\end{itemize}

\subsection{From quotient multiplication to modular sums}

The final step connects the quotient-group Sidon property to the combinatorial
\texttt{IsSidonMod} predicate.  This transfer has four components:

\begin{enumerate}[nosep]
  \item \textbf{Projective representatives}: we construct $p + 1$ nonzero
    elements of $\ker(\mathrm{Tr})$ representing the projective lines of the
    2-dimensional kernel, parametrized by $\mathrm{Option}(\mathrm{GF}(p))$
    (the $p$ affine lines plus one line at infinity).
  \item \textbf{Injectivity}: these representatives map to distinct cosets in
    the quotient, proved by showing that proportionality of representatives
    forces equality of the parametrizing indices.
  \item \textbf{Cyclic isomorphism}: we exhibit a group isomorphism
    $\mathbb{Z}/(p^2+p+1)\mathbb{Z} \cong \mathrm{GF}(p^3)^\times /
    \mathrm{GF}(p)^\times$ by computing the index
    $[\mathrm{GF}(p^3)^\times : \mathrm{GF}(p)^\times] =
    (p^3-1)/(p-1) = p^2+p+1$ and invoking cyclicity of the quotient
    (since $\mathrm{GF}(p^3)^\times$ is cyclic as the multiplicative group of
    a finite field, its quotient by any subgroup is cyclic).
  \item \textbf{Additive transfer}: let $g$ be a generator of the cyclic
    quotient of order $M = p^2+p+1$, and let
    $i : \mathrm{GF}(p^3)^\times/\mathrm{GF}(p)^\times \to
    \mathbb{Z}/(p^2+p+1)\mathbb{Z}$ be the group isomorphism sending
    $[x] \mapsto \log_g [x]$.  No canonical generator is used; any chosen
    cyclic generator yields such an isomorphism.  Since $i$ is a homomorphism,
    a multiplicative collision $[u][v] = [w][x]$ gives
    $i([u])+i([v]) \equiv i([w])+i([x]) \pmod{M}$,
    so the multiplicative Sidon property yields modular-additive Sidon:
    \begin{align*}
      &M \mid (i([u]){+}i([v]))-(i([w]){+}i([x])) \\
      &\quad\implies \{[u],[v]\}=\{[w],[x]\} \\
      &\quad\implies \{i([u]),i([v])\}=\{i([w]),i([x])\}.
    \end{align*}
    This is formalized via
    $\texttt{ZMod.intCast\_zmod\_eq\_zero\_iff\_dvd}$.
\end{enumerate}

Concretely, after choosing the isomorphism~$i$, the integer set~$S$ is formed
by taking the canonical integer representative in $\{0,\dots,M-1\}$ of each
residue $i([u])$, where~$u$ ranges over the chosen projective representatives
of the trace kernel.  Injectivity of the projective representatives and of the
cyclic isomorphism gives $|S|=p+1$.  The passage from equality in
$\mathbb{Z}/M\mathbb{Z}$ to the divisibility predicate in
\texttt{IsSidonMod} is exactly the equivalence
$\overline{x}=\overline{y}\iff M\mid x-y$; unordered-pair equality is then
transported back through the injective representative map.  The quotient-group
construction is therefore realized as a finite subset of~$\mathbb{Z}$ with
integer representatives.

\begin{definition}[\texttt{SingerFamilyHypothesis}]
\label{def:singer-family}
For every prime~$p$, there exists $S \subseteq \mathbb{Z}$ with $|S| = p + 1$
such that $S$ is Sidon modulo $p^2 + p + 1$.
\end{definition}

\noindent We package this as a named hypothesis so that
$\S$\ref{sec:conditional} can state conditional theorems parametrically,
remaining usable with alternative Singer-type constructions (e.g., for prime
powers):

\begin{theorem}[Singer's theorem, prime-field case]
\label{thm:singer-main}
For every prime~$p$, there exists $S \subseteq \mathbb{Z}$ with $|S| = p + 1$
that is Sidon modulo $p^2 + p + 1$.
In particular, \texttt{SingerFamilyHypothesis} holds.
\end{theorem}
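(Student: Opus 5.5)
The plan is to assemble Theorem~\ref{thm:singer-main} from the four transfer components listed above, with Theorem~\ref{thm:singer-sidon} as the algebraic core. Fix a prime $p$, write $K=\mathrm{GF}(p^3)$, $V=\ker(\mathrm{Tr})$, $M=p^2+p+1$, and $Q=K^\times/\mathrm{GF}(p)^\times$.

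\textbf{Step 1: representatives.} By Theorem~\ref{thm:finrank-ker}, $V$ has a $\mathrm{GF}(p)$-basis $e_1,e_2$. I index the projective lines of $V$ by $\mathrm{Option}(\mathrm{GF}(p))$, sending $\mathrm{some}\ t\mapsto e_1+t e_2$ and $\mathrm{none}\mapsto e_2$. Each representative is nonzero by linear independence. The map to $Q$ is injective: if two representatives are proportional by a scalar $c\in\mathrm{GF}(p)^\times$, then comparing coordinates in the basis forces $c=1$ and equal indices, while the mixed case $e_1+te_2=ce_2$ is impossible. This yields $p+1$ distinct cosets $[r_j]$.

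\textbf{Step 2: cyclic isomorphism.} $K^\times$ is cyclic, so $Q$ is cyclic. Its order is $(p^3-1)/(p-1)=M$, computed from $|K|=p^3$ (the finrank is $3$) together with Lagrange's theorem. Choosing a generator gives an isomorphism $i:Q\to\mathbb{Z}/M\mathbb{Z}$ (in Lean, via \texttt{zmultiplesHom} and cardinality). I then define $S=\{\mathrm{val}(i[r_j])\}\subseteq\{0,\dots,M-1\}$. Because $i$ and the cast are injective on canonical representatives, $|S|=p+1$.

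\textbf{Step 3: Sidon mod $M$.} Take $a,b,c,d\in S$ with $M\mid(a+b)-(c+d)$, and write them as $\mathrm{val}(i[u])$ and so on with $u,v,w,x$ among the $r_j$. Casting into $\mathbb{Z}/M$ gives $i[u]+i[v]=i[w]+i[x]$, which means $[u][v]=[w][x]$ in $Q$. Unfolding the coset relation produces $\alpha\in\mathrm{GF}(p)^\times$ with $uv=\alpha(wx)$. Theorem~\ref{thm:singer-sidon} then yields $\{[u],[v]\}=\{[w],[x]\}$, and applying $i$ and $\mathrm{val}$ gives $\{a,b\}=\{c,d\}$. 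The ``in particular'' clause is immediate from Definition~\ref{def:singer-family}.

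\textbf{Expected obstacle.} I expect the main difficulty to be the bookkeeping in Steps~2--3, not the algebra. Three points need care:
\begin{itemize}[nosep]
\item computing the index exactly as $M$, including the division $(p^3-1)/(p-1)$ in $\mathbb{N}$;
\item converting quotient-group equality into the explicit scalar form $uv=\alpha wx$, with $\alpha$ living in the embedded base-field unit subgroup;
\item pulling integers in $S$ back to their chosen representatives.
\end{itemize}
For the third point, I would first define $S$ as the image of a function on $\mathrm{Option}(\mathrm{GF}(p))$ and prove that this function is injective, so that membership in $S$ yields a preimage index directly.
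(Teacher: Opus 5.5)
Your proposal is correct and follows essentially the same route as the paper: projective representatives of the trace kernel indexed by $\mathrm{Option}(\mathrm{GF}(p))$ and their injectivity into the quotient, then the cyclic isomorphism obtained from the index $(p^3-1)/(p-1)=p^2+p+1$. From there you take canonical integer representatives in $\{0,\dots,M-1\}$ and transfer divisibility to equality in $\mathbb{Z}/M\mathbb{Z}$, which feeds into Theorem~\ref{thm:singer-sidon}, with the bookkeeping points you flag being where the paper also places the work.
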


\begin{lstlisting}
theorem singerFamilyHypothesis_holds : SingerFamilyHypothesis := by
  intro p hp'
  haveI : Fact (Nat.Prime p) := ⟨hp'⟩
  exact Singer.singer_sidon_set p hp'
\end{lstlisting}

\begin{remark}[Scope]
The prime-field theorem remains useful as the simplest presentation of the
algebraic chain.  The prime-power theorem reuses the same projective-line
argument over \texttt{GaloisField p k} and a degree-three finite-field
extension, and discharges the prime-power Singer family hypothesis.  It does
not discharge the subpolynomial prime-gap hypothesis or the full
Erd\H{o}s~30 statement.
\end{remark}

\begin{theorem}[Singer's theorem, prime-power case]
\label{thm:singer-prime-power}
For every prime power \(q=p^k\), there exists \(S\subseteq\mathbb{Z}\) with
\(|S|=q+1\) that is Sidon modulo \(q^2+q+1\).  In particular,
\texttt{SingerPrimePowerFamilyHypothesis} holds.
\end{theorem}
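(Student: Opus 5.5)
The plan is to rerun the prime-field chain of Theorems~\ref{thm:finrank-ker}--\ref{thm:singer-main} with base field $F=\texttt{GaloisField p k}$, $|F|=q=p^k$, and $L$ the degree-three extension of $F$ from \texttt{FiniteField.Extension}, so $|L|=q^3$ and $\dim_F L=3$. Only three places use primality of the base, and each needs replacing. First, the trace $\mathrm{Tr}_{L/F}$ is surjective because $L/F$ is a finite separable extension of finite fields. Rank--nullity then gives $\dim_F V=2$ for $V=\ker\mathrm{Tr}_{L/F}$, and so $|V|=q^2$. Second, Theorem~\ref{thm:no-invariant} carries over verbatim with $\mathrm{GF}(p)$ replaced by $F$. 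Since $[L:F]=3$ is prime, the tower law shows $F(\alpha)$ is $F$ or $L$, so every $\alpha\notin F$ (meaning not in the image of the algebra map) has degree-$3$ minimal polynomial over $F$. For $v\neq0$ the vectors $v,\alpha v,\alpha^2v$ are then independent. Third, Theorem~\ref{thm:intersection} is pure linear algebra over any field, so the Grassmann argument applies unchanged.

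Next I prove the quotient Sidon statement, the analogue of Theorem~\ref{thm:singer-sidon}, in $L^\times/F^\times$, where $F^\times$ is the image of $F^\times\to L^\times$. Suppose $u,v,w,x\in V\setminus\{0\}$ and $uv=\alpha wx$ with $\alpha\in F^\times$. Split on whether $\beta=uw^{-1}$ lies in $F$. If it does, $[u]=[w]$ follows, and then $[v]=[x]$. If it does not, then $w,v\in V\cap\beta^{-1}V$. This intersection is $1$-dimensional: $\beta^{-1}V\neq V$ by the invariance theorem applied to $\beta^{-1}\notin F$, and then Grassmann applies. Hence $v=cw$ with $c\in F^\times$.

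For the transfer I need $|L^\times/F^\times|=(q^3-1)/(q-1)=q^2+q+1$. The quotient is cyclic because $L^\times$ is cyclic, which gives an isomorphism $i$ onto $\mathbb{Z}/(q^2+q+1)\mathbb{Z}$. For projective representatives of $V$, I choose an $F$-basis $e_1,e_2$ of $V$ and index by $\mathrm{Option}(F)$: $\mathrm{none}\mapsto e_2$ and $\mathrm{some}\,t\mapsto e_1+te_2$. These give $q+1$ nonzero vectors in pairwise distinct cosets, since proportionality forces equal indices by independence of $e_1,e_2$. Let $S$ be the set of canonical representatives in $\{0,\dots,q^2+q\}$ of the values $i([\cdot])$. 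Then $|S|=q+1$, and the modular Sidon property follows exactly as in the prime case through \texttt{ZMod.intCast\_zmod\_eq\_zero\_iff\_dvd}.

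I expect the main obstacle to be the bookkeeping at the field level rather than the mathematics. The degree-three extension has to be set up so that Mathlib yields $\mathrm{Module.finrank}\,F\,L=3$, $\mathrm{card}\,L=q^3$, and the scalar subgroup $F^\times\hookrightarrow L^\times$ with cardinality $q-1$. The index computation $(q^3-1)/(q-1)$ also has to hold in $\mathbb{N}$ with $q=p^k$ and $k>0$. My first step would be to prove $\mathrm{card}\,L=\mathrm{card}\,F^{\,3}$ from \texttt{Module.card\_fintype} and the finrank. The index then follows from Lagrange via $q^3-1=(q-1)(q^2+q+1)$.
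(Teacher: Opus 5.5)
Your proposal is correct and follows essentially the paper's route. Like the paper, you run the trace-kernel, no-invariant-subspace and Grassmann chain over $F=\texttt{GaloisField p k}$ and its degree-three \texttt{FiniteField.Extension}, use the $\beta=uw^{-1}$ case split for the quotient Sidon property, build the $\mathrm{Option}(F)$-indexed projective representatives, and transfer through the cyclic quotient of index $(q^3-1)/(q-1)=q^2+q+1$. One small step worth stating explicitly: $\beta^{-1}V$ is again $2$-dimensional, since multiplication by $\beta^{-1}$ is an $F$-linear automorphism of $L$, and this is needed before you invoke the intersection theorem.
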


\begin{lstlisting}
theorem singerPrimePowerFamilyHypothesis_holds :
    SingerPrimePowerFamilyHypothesis := by
  intro q hq
  obtain ⟨p, k, hp_prime, hk_pos, hpk⟩ := hq
  ...
\end{lstlisting}

\noindent The theorem surface is also smoke-tested at \(q=4\), outside the
prime-only case:
\begin{lstlisting}
theorem SingerPrimePower.singer_primePower_smoke_q4 :
    ∃ S : Finset ℤ,
      IsSidonMod (4 * 4 + 4 + 1 : ℤ) S ∧ S.card = 4 + 1
\end{lstlisting}

\section{Sidon Set Infrastructure}
\label{sec:sidon-infra}

\subsection{Predicates}

We define three levels of Sidon property, each building on the previous:
\begin{itemize}[nosep]
  \item $\mathtt{IsSidon}(A)$: the set $A \subseteq \mathbb{Z}$ is Sidon.
  \item $\mathtt{IsSidonMod}(M, A)$: $A$ is Sidon modulo~$M$, i.e.,
    $M \mid (a+b)-(c+d)$ with $a,b,c,d \in A$ implies $\{a,b\}=\{c,d\}$.
  \item $\mathtt{IsIntervalSidon}(N, A)$: $A$ is Sidon and
    $A \subseteq \{1,\dots,N\}$.
\end{itemize}

The modular predicate mediates between algebraic constructions (which produce
Sidon sets in cyclic groups) and combinatorial targets (which require Sidon
sets in integer intervals).

\subsection{Cardinality identities for Sidon sets}

For finite integer Sidon sets, the library also proves exact cardinality
consequences that are independent of the Singer construction.  The theorem
\texttt{IsSidon.card\_add} gives the sharp sumset formula
$|A+A|=|A|(|A|+1)/2$, and
\texttt{isSidon\_iff\_card\_add} proves the converse characterization:
maximal sumset cardinality forces the Sidon property.  For nonempty Sidon
sets, \texttt{IsSidon.card\_sub} gives
$|A-A|=|A|^2-|A|+1$.  Separately, \texttt{IsSidon.addEnergy\_eq} gives
additive energy $2|A|^2-|A|$ without a nonemptiness hypothesis.  Combining
the exact sumset and difference-set formulas, \texttt{IsSidon.card\_sub\_ge\_card\_add}
proves $|A-A|\ge |A+A|$ for nonempty Sidon sets (strictly for $|A|\ge 3$ via
\texttt{IsSidon.card\_sub\_gt\_card\_add}).

\subsection{Representation functions}

The representation-function modules connect the Sidon predicates to Mathlib's
additive convolution API.
\begin{definition}[Representation function]
For a finite integer set~$A$, write
\[
  r_A(n)=|\{(a,b)\in A^2 : a+b=n\}|.
\]
\end{definition}
The theorem \texttt{IsSidon.repr\_le\_two} proves that, if $A$ is Sidon, then
$r_A(n)\le2$ for every integer~$n$.  The library also proves the exact
identities
\[
  \sum_{n\in A+A} r_A(n)=|A|^2,\qquad
  \sum_{n\in A+A} r_A(n)^2=2|A|^2-|A|,
\]
and the deficiency formula
\[
  \sum_{n\in A+A} (2-r_A(n))=|A|.
\]
These results are independent of Singer's construction and provide a reusable
link between Sidon-set combinatorics, additive energy, and analytic
upper-bound arguments.

\subsection{The extremal function}

\begin{definition}
$\mathtt{sidonMaximum}(N) = \max\{|A| : \mathtt{IsIntervalSidon}(N, A)\}$.
\end{definition}

\noindent We write $h(N)$ for $\mathtt{sidonMaximum}(N)$, matching the standard
notation of $\S$\ref{sec:intro}.

Existence of the maximum is proved via finiteness of the powerset of
$\{1,\dots,N\}$ and non-emptiness (the singleton $\{1\}$ is Sidon for
$N \ge 1$).  The Lean definition then uses \texttt{Classical.choose} to select
the unique maximum witness, so \texttt{sidonMaximum} is noncomputable even
though the underlying finite maximum exists.  We also prove basic properties:
positivity ($h(N) \ge 1$
for $N \ge 1$) and monotonicity ($N \le M \implies h(N) \le h(M)$).

\subsection{Transfer machinery}

The modular-to-interval transfer proceeds via cyclic window relabeling.
Given a modular Sidon set~$S$ with $\mathtt{IsSidonMod}\ M\ S$, a cyclic
window of length~$N$ starting at offset~$u$ restricts~$S$ to elements
$x$ with $(x - u) \bmod M < N$ and relabels them to
$\{1,\dots,N\}$.  The key theorem (\texttt{IsSidonMod.windowSidon}):

\begin{lemma}[Cyclic window transfer]
If $\mathtt{IsSidonMod}\ M\ S$ and $0 < N \le M$, then for any offset~$u$,
the relabeled window restriction is an interval Sidon set in $\{1,\dots,N\}$.
\end{lemma}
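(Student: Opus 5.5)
We make the window map explicit and reduce the interval Sidon property to the modular Sidon property of \(S\). Fix \(u\) and set \(\phi(x)=((x-u)\bmod M)+1\). The window set is
\[
  W=\{\phi(x): x\in S,\ (x-u)\bmod M<N\}.
\]
Containment \(W\subseteq\{1,\dots,N\}\) is immediate from the filter condition. The key congruence is \(\phi(x)\equiv x-u+1 \pmod M\) for every \(x\). Hence for any \(x_1,x_2,x_3,x_4\) we have
\[
  (\phi(x_1)+\phi(x_2))-(\phi(x_3)+\phi(x_4))\equiv (x_1+x_2)-(x_3+x_4)\pmod M,
\]
because the shifts \(-u+1\) cancel two against two.

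To prove \(W\) is Sidon, take \(a,b,c,d\in W\) with \(a+b=c+d\). Choose preimages \(x_a,x_b,x_c,x_d\in S\) satisfying the window condition. Then \(0=(a+b)-(c+d)\equiv(x_a+x_b)-(x_c+x_d)\pmod M\), so \(M\) divides that integer difference. \texttt{IsSidonMod} then gives \(\{x_a,x_b\}=\{x_c,x_d\}\) as unordered pairs, and applying \(\phi\) to both sides gives \(\{a,b\}=\{c,d\}\).

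The delicate point is choosing preimages consistently. If \(a=c\), we may have picked different preimages \(x_a\ne x_c\), and then the pair equality we obtain need not transport cleanly. We handle this by fixing one preimage per element of \(W\) with \texttt{Classical.choose}, so that equal elements of \(W\) get equal preimages. Then \(\phi(x_a)=a\) holds by construction, and the pair equality maps back correctly whatever the choice. Injectivity of \(\phi\) on \(S\) is not needed for this argument.

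The hypothesis \(N\le M\) matters for cardinality and averaging statements later, and it guarantees the window has the intended length. It is not needed for Sidonness itself. We still keep it so that the relabeled window genuinely lies in \(\{1,\dots,N\}\) with \(\phi\) injective on the filtered set: for \(x\equiv y \pmod M\) with \(x\ne y\) both in \(S\), modular Sidon applied to \(x+x\equiv y+y\) forces \(\{x,x\}=\{y,y\}\), so \(x=y\). This contradiction shows distinct elements of \(S\) have distinct residues, so \(\phi\) is injective on \(S\). That injectivity is what downstream cardinality counting will use.

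The main obstacle is the Lean bookkeeping: converting the integer \(\bmod\) into the \texttt{ZMod}/divisibility form via \texttt{Int.emod\_emod\_of\_dvd}-style lemmas and \texttt{Int.ModEq}, and transporting unordered-pair equality (\texttt{Sym2} or the disjunctive form) through \(\phi\). Everything else is the congruence computation above.
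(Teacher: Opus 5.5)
Your argument is correct and takes essentially the paper's route: the relabeling $x\mapsto((x-u)\bmod M)+1$ is a uniform shift modulo $M$, so a collision $a+b=c+d$ in the window pulls back to an $M$-divisible collision among preimages in $S$. \texttt{IsSidonMod} then returns the pair equality, and containment in $\{1,\dots,N\}$ follows from the filter together with $M\ge N>0$, which is what makes the residue nonnegative. The consistent-preimage device is unnecessary, since $\phi(x_a)=a$ (and likewise for $b,c,d$) holds for any choice of preimages. The injectivity of $\phi$ on $S$, though correctly proved, is needed only later for the cardinality counts.
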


The cardinality of the window image depends on how many elements of~$S$ fall
in the chosen window.  The first averaging lemma counts membership directly:
for each fixed~$s\in S$, exactly~$N$ of the~$M$ offsets satisfy
$(s-u)\bmod M<N$.  Hence the sum, over all offsets~$u$, of the window
cardinalities is $|S|N$, so some offset captures at least
$\lceil |S|N/M\rceil$ elements
(\texttt{IsSidonMod.exists\_large\_intervalSidon}).

The full-size transfer uses the stronger cyclic-gap structure forced by the
modular Sidon property.  Sort the residues of~$S$ in $\{0,\dots,M-1\}$ and let
the cyclic gaps be the successive forward differences.  These gaps are
positive and sum to~$M$.  They are also pairwise distinct: if two distinct
cyclic gaps were equal, the two associated ordered difference representations
would contradict
the modular Sidon condition.  If \(G\) is the largest gap and \(s=|S|\), then
the \(s\) distinct positive gaps are all at most \(G\), so
\[
  M \le G+(G-1)+\cdots+(G-s+1)
    = sG-\frac{s(s-1)}2 .
\]
Thus \(G\ge (M+s(s-1)/2)/s\), and Lean uses the corresponding
natural-number floor threshold
\[
  \lfloor(M+|S|(|S|-1)/2)/|S|\rfloor .
\]
Deleting a largest gap leaves a complementary cyclic window that contains all
elements of~\(S\), giving a full-size interval Sidon set whenever
\[
  N \ge M -
      \lfloor(M+|S|(|S|-1)/2)/|S|\rfloor + 1
\]
(formalized by the quantitative-gap theorem in \texttt{SidonGap.lean} and its
Singer-parameter specialization in \texttt{PrimePowerTransfer.lean}).
For Singer parameters ($|S| = p+1$, $M = p^2+p+1$), Lean's natural-number
division simplifies the threshold to
$N \ge p^2+p+2-\lfloor 3p/2 \rfloor$, yielding an interval Sidon set of
size exactly~$p+1 > p$.

\section{Unconditional Bounds}
\label{sec:bounds}

\subsection{Upper bound}

\begin{theorem}[Pair-difference upper bound]
\label{thm:upper-bound}
For $N \ge 1$, $h(N) \le \sqrt{2N} + 1$.
\end{theorem}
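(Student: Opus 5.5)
The plan is the classical counting of positive differences. Let $A\subseteq\{1,\dots,N\}$ be an interval Sidon set with $|A|=k=h(N)$; such a maximizer exists by the definition of \texttt{sidonMaximum}. First I will show that the map $(a,b)\mapsto a-b$ on ordered pairs with $a>b$ in $A$ is injective. Suppose $a-b=c-d$ with $a>b$, $c>d$. Then $a+d=c+b$, and the Sidon property gives $\{a,d\}=\{c,b\}$. The case $a=b$ (from $a=b$, $d=c$) is excluded by $a>b$. Hence $a=c$ and $d=b$, and the ordered pairs coincide. In Lean this becomes an injectivity statement on the off-diagonal filter of $A\times A$ restricted to $a>b$, which has $\binom{k}{2}$ elements.

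Next, every such difference lies in $\{1,\dots,N-1\}$, because $1\le b<a\le N$. Injectivity then yields the counting inequality
\[
  \frac{k(k-1)}{2}\le N-1,
\]
proved with \texttt{Finset.card\_le\_card\_of\_injOn} into \texttt{Finset.Icc 1 (N-1)}. The $\binom{k}{2}$ cardinality comes from \texttt{Finset.card\_powersetCard} or from the off-diagonal count divided by two.

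Finally, the real-number step. From $k(k-1)\le 2N-2<2N$, if $k\ge 1$ then $(k-1)^2\le k(k-1)<2N$, so $k-1<\sqrt{2N}$ and $k\le\sqrt{2N}+1$. If $k=0$ the bound is trivial, although $h(N)\ge1$ for $N\ge1$ anyway. In Lean I plan to argue by contradiction: assume $k-1>\sqrt{2N}$ with $k-1\ge 0$, square via \texttt{Real.lt\_sqrt} or \texttt{Real.sqrt\_lt'}, and close the resulting natural-number inequality after casting, using \texttt{nlinarith}.

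I expect the main obstacle to be the injectivity/counting step, not the mathematics. The difficulty is matching the unordered-collision form of the Lean \texttt{IsSidon} predicate to the ordered-pair parametrization: one has to rule out the "swapped" identification cleanly using $a>b$, $c>d$, and then express the domain cardinality as $k(k-1)/2$ without natural-number division pitfalls. To avoid halving, I would instead count all ordered off-diagonal pairs $(a,b)$, $a\ne b$, mapped injectively to nonzero differences in $[-(N-1),N-1]\setminus\{0\}$. That gives $k(k-1)\le 2(N-1)$ directly, with the same injectivity argument.
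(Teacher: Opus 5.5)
Your proposal is correct and follows essentially the same route as the paper: injectivity of positive differences via the Sidon property, counting them into $\{1,\dots,N-1\}$ to get $m(m-1)/2\le N-1$, then $(m-1)^2\le m(m-1)<2N$ to conclude $m\le\sqrt{2N}+1$. Your variant of counting all ordered off-diagonal pairs into the $2(N-1)$ nonzero differences, to avoid natural-number halving, is a harmless reformulation of the same argument.
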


This follows from the pair-difference counting bound: if \(A\) is a Sidon
subset of \(\{1,\dots,N\}\) with \(|A|=m\), then the \(m(m-1)/2\) positive
pairwise differences are all distinct elements of \(\{1,\dots,N-1\}\).
Thus \(m(m-1)/2\le N-1\).  Since
\[
  (m-1)^2 \le m(m-1) \le 2(N-1) < 2N,
\]
we obtain \(m-1<\sqrt{2N}\), hence \(m\le\sqrt{2N}+1\).

We also formalize Lindstr\"om's cross-difference inequality as reusable
infrastructure:

\begin{theorem}[Lindstr\"om cross-difference inequality]
\label{thm:lindstrom-cross}
For a Sidon set $\{a_1 < \cdots < a_m\} \subseteq \{1,\dots,N\}$ and any
$1 \le k \le m$, the cross-differences $a_j - a_i$ with $1 \le i \le k < j \le m$ are
distinct elements of $\{1,\dots,N-1\}$, giving $(m-k) \cdot k \le N - 1$.
\end{theorem}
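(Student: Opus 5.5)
We prove the three parts of the statement in order: distinctness, range, and the counting bound. Write the elements of the Sidon set in increasing order, $a_1<\cdots<a_m$, and fix $k$ with $1\le k\le m$. Set
\[
  D_k=\{(i,j): 1\le i\le k<j\le m\}.
\]
This index set has exactly $k(m-k)$ elements. The goal is to show that the map $(i,j)\mapsto a_j-a_i$ is injective on $D_k$ and lands in $\{1,\dots,N-1\}$. Cardinality comparison then gives $(m-k)k\le N-1$.

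\textbf{Range.} For $(i,j)\in D_k$ we have $i<j$, so $a_j-a_i\ge 1$ by strict monotonicity. Since $1\le a_i$ and $a_j\le N$, we also get $a_j-a_i\le N-1$.

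\textbf{Injectivity.} Suppose $(i,j),(i',j')\in D_k$ with $a_j-a_i=a_{j'}-a_{i'}$. Rearranging gives $a_j+a_{i'}=a_{j'}+a_i$. The Sidon property (in its unordered collision form) then yields one of two cases.
\begin{enumerate}[nosep]
  \item $\{a_j,a_{i'}\}=\{a_{j'},a_i\}$ with $a_j=a_{j'}$ and $a_{i'}=a_i$. Injectivity of the ordering gives $j=j'$ and $i=i'$, as desired.
  \item $a_j=a_i$ and $a_{i'}=a_{j'}$. This forces $j=i$, which contradicts $i\le k<j$.
\end{enumerate}
Note that here the cross structure is not even needed beyond $i<j$: it only guarantees that a difference cannot be matched with itself in the degenerate way. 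Any pair with $i<j$ suffices for this step. Hence all $k(m-k)$ cross-differences are pairwise distinct.

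\textbf{Counting.} The image of $D_k$ is a subset of $\{1,\dots,N-1\}$ of size $k(m-k)$. Therefore $k(m-k)\le N-1$. (When $k=m$ the index set is empty and the bound is trivial.)

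\textbf{Main obstacle.} The difficulty is not mathematical but bookkeeping. In Lean the plan is to sort $A$ via \texttt{Finset.orderEmbOfFin}, which gives a strictly monotone, injective indexing $a:\mathrm{Fin}\,m\to\mathbb{Z}$. We then express $D_k$ as a product of $\mathrm{Fin}$-filters (or of \texttt{Finset.Ico} ranges) whose cardinality is computed as $k\cdot(m-k)$. After that, \texttt{Finset.card\_le\_card\_of\_injOn} applies, with target \texttt{Finset.Icc 1 (N-1)} of cardinality $N-1$. I expect the fiddliest point to be translating the unordered Sidon conclusion into index equalities, with the case split handled by injectivity of the embedding. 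The natural-number versus integer casts for $N-1$ (using $N\ge1$, which holds since $A$ is nonempty when $m\ge1$) will likely need \texttt{omega}.
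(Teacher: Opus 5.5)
Your proof is correct and follows essentially the same route as the paper. Both arguments sort $A$ and split it into the bottom $k$ and top $m-k$ elements. Both then derive injectivity of $(i,j)\mapsto a_j-a_i$ from the Sidon collision $a_j+a_{i'}=a_{j'}+a_i$, bound the image inside $\{1,\dots,N-1\}$, and conclude by cardinality comparison. Your side remark that injectivity uses only $i<j$ is also right: it is the same distinct-differences fact that drives the paper's pair-difference bound $h(N)\le\sqrt{2N}+1$.
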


The proof proceeds by splitting the sorted set into bottom-$k$ and top-$(m-k)$
elements, establishing the injectivity of the cross-difference map via the Sidon
property, and bounding the image within $\{1,\dots,N-1\}$.

\begin{theorem}[Johnson-route shift-incidence upper bound]
\label{thm:lindstrom-improved}
For $N \ge 16$,
\[
  h(N)\le \lfloor\sqrt{N}\rfloor+
        \lfloor\sqrt{\lfloor\sqrt{N}\rfloor}\rfloor+2.
\]
This matches Lindstr\"om's classical $\sqrt N+N^{1/4}+O(1)$ leading order.
\end{theorem}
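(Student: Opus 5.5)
We would prove Theorem~\ref{thm:lindstrom-improved} with the Erd\H{o}s--Tur\'an/Johnson window-counting argument, and keep everything in integer arithmetic so that the floors come out directly. Fix a Sidon set $A\subseteq\{1,\dots,N\}$ with $|A|=m$ and an integer shift length $u\ge1$. For each offset $t\in\{-u+1,\dots,N-1\}$ let $I_t=\{t+1,\dots,t+u\}$, and let $x_t=|A\cap I_t|$. There are $N+u-1$ windows, and every $a\in A$ lies in exactly $u$ of them, so $\sum_t x_t=mu$.

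Next we count incidences of ordered pairs. An ordered pair $(a,a')$ with $a\ne a'$ and $d=|a'-a|<u$ lies in exactly $u-d$ common windows. Because $A$ is Sidon, its positive differences are distinct; this is the injectivity already used in Theorem~\ref{thm:upper-bound} and Theorem~\ref{thm:lindstrom-cross}. Hence each $d\in\{1,\dots,u-1\}$ occurs for at most one unordered pair, and
\[
  \sum_t x_t(x_t-1)\le 2\sum_{d=1}^{u-1}(u-d)=u(u-1).
\]
Cauchy--Schwarz (in Lean, \texttt{Finset.sum\_mul\_sq\_le\_sq\_mul\_sq} or \texttt{inner\_mul\_le\_norm\_mul\_norm} over the finite index set) then gives the key integer inequality
\[
  m^2u^2\le (N+u-1)\bigl(u(u-1)+mu\bigr),\qquad\text{that is,}\qquad m^2u\le (N+u-1)(u-1+m).
\]
This is the shift-incidence lemma, and we would state and prove it for an arbitrary $u$ before choosing parameters.

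For the parameters, set $s=\lfloor\sqrt N\rfloor$ and $r=\lfloor\sqrt s\rfloor$, so that $s^2\le N<(s+1)^2$ and $r^2\le s<(r+1)^2$. The hypothesis $N\ge16$ gives $s\ge4$ and $r\ge2$. We take $u=sr$, which is of order $N^{3/4}$. Suppose, for contradiction, that $m\ge s+r+3$. The left side $m^2u$ grows faster in $m$ than the right side $(N+u-1)(u-1+m)$, so it suffices to refute the inequality at $m=s+r+3$. This monotonicity step is itself a small lemma: the difference of the two sides is a quadratic in $m$ that is increasing for $m$ beyond its vertex. Substituting $N\le s^2+2s$ and expanding, the claim becomes a polynomial inequality in $s$ and $r$ of the form
\[
  (s+r+3)^2sr>(s^2+2s+sr-1)(sr+s+r+2).
\]
We would prove it using only $r^2\le s\le r^2+2r$ and $r\ge2$. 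Heuristically, the leading terms $s^3r+2s^2r^2$ match on both sides, and the surplus on the left comes from the $+3$ slack against lower-order terms such as $s^3$ and $s^2r$. We would then use $s\le r^2+2r$ to compare $s^3$ with $s^2r^2$.

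We expect this last polynomial inequality to be the main obstacle. It must be checked honestly with the floor slack $s\le r^2+2r$, and the constant $+2$ in the statement leaves very little room. Our first attempt would be to substitute $s=r^2+e$ with $0\le e\le 2r$, so that everything becomes a polynomial inequality in $r$ and $e$, and then close it with \texttt{nlinarith} using products of the hypotheses $e\ge0$, $2r-e\ge0$ and $r-2\ge0$. If that fails, we would adjust the parameter choice, trying $u=s(r+1)$ or $u=sr+s$ instead of $u=sr$, since the same Cauchy--Schwarz inequality holds for every $u$. If a few small values of $(s,r)$ still resist, we would split them off and handle them by a direct finite case analysis.
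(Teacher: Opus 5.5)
Your proposal is correct and is essentially the paper's proof. Your window counts $x_t$ are exactly the paper's multiplicities $f(x)$ of the shifted copies $A,A+1,\dots,A+(u-1)$, and your distinct-difference pair count is the paper's bound $\sum_x\binom{f(x)}2\le\binom u2$. You reach the same inequality $m^2u\le(N+u-1)(u-1+m)$ with the same choice $u=\lfloor\sqrt N\rfloor\cdot\lfloor\sqrt{\lfloor\sqrt N\rfloor}\rfloor$. Your final polynomial inequality does close with $u=sr$, so no fallback is needed: writing $s=r^2+e$, the worst case $e=2r$ still leaves a surplus of at least $s(r^3+r^2-2r-3)+r+2>0$ for $r\ge2$.
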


\noindent The proof follows the Johnson-bound route~\cite{Johnson1962}: the
same shift-incidence counting argument originally used for codes applies
mutatis mutandis to Sidon sets, as in the classical discussion of
\(B_2\)-sequences~\cite[\S C9]{Guy2004}.  For a Sidon set~$A\subseteq\{1,\dots,N\}$
with \(|A|=m\), consider the \(r\) shifted copies
\(A,A+1,\dots,A+(r-1)\), all contained in \(\{1,\dots,N+r-1\}\).
Distinct shifts intersect in at most one point
(\texttt{IsSidon.shift\_inter\_le\_one}).  If \(f(x)\) counts how many shifted
copies contain \(x\), then
\(\sum_x f(x)=mr\) and
\(\sum_x \binom{f(x)}2\le \binom r2\).  Hence
\(\sum_x f(x)^2\le mr+r(r-1)=r(m+r-1)\).  Cauchy--Schwarz gives
\[
  |A|^2 r \le (N+r-1)(r+|A|-1).
\]
Choosing
\(r=\lfloor\sqrt{N}\rfloor\cdot
\lfloor\sqrt{\lfloor\sqrt{N}\rfloor}\rfloor\) and carrying out the integer
rounding in Lean yields Theorem~\ref{thm:lindstrom-improved}; writing
\(m=\sqrt N+t\), the continuous relaxation suggests taking
\(r\approx N^{3/4}\), which balances the error terms \(r^2\) and \(N^{3/2}\)
and gives \(t=O(N^{1/4})\).  The displayed additive constant~$2$ is the one
obtained by the exact natural-number floor argument; the preceding continuous
calculation motivates the scale of the error term.
Asymptotically this has the same leading order as Lindstr\"om's classical
$\sqrt{N}+O(N^{1/4})$ estimate; the displayed finite bound records the exact
integer-rounded form proved in Lean.

\begin{corollary}[Upper-bound hypothesis for $\varepsilon \ge 1/4$]
\label{cor:upper-quarter}
For every $\varepsilon\ge1/4$, there are constants $C,N_0$ such that
\[
  h(N)\le \sqrt{N}+C N^\varepsilon \qquad (N\ge N_0).
\]
\end{corollary}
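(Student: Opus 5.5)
We derive the corollary directly from Theorem~\ref{thm:lindstrom-improved}, taking $N_0=16$ and an explicit constant $C$. For $N\ge16$ we use $\lfloor\sqrt N\rfloor\le\sqrt N$ and
\[
  \lfloor\sqrt{\lfloor\sqrt N\rfloor}\rfloor\le\sqrt{\sqrt N}=N^{1/4},
\]
which gives the real-valued inequality
\[
  h(N)\le \sqrt N+N^{1/4}+2 .
\]
Both floor estimates follow from monotonicity of the real square root together with $\lfloor x\rfloor\le x$. In Lean this means casting the natural-number floors (\texttt{Nat.sqrt}) to $\mathbb{R}$ and using $(\texttt{Nat.sqrt}\,n)^2\le n$.

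Next we absorb the constant $2$ into the $N^{1/4}$ term. For $N\ge16$ we have $N^{1/4}\ge 2$, so $2\le N^{1/4}$ and therefore
\[
  h(N)\le\sqrt N+2N^{1/4}.
\]
Finally, for $\varepsilon\ge1/4$ and $N\ge1$, monotonicity of $t\mapsto N^t$ (valid because the base is at least $1$) gives $N^{1/4}\le N^{\varepsilon}$. Hence $C=2$ and $N_0=16$ work, uniformly in $\varepsilon$.

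The only delicate step is the first one: moving from the exact natural-number floor statement to real powers. It requires identifying $\sqrt{\sqrt N}$ with $N^{1/4}$, via \texttt{Real.sqrt\_eq\_rpow} and \texttt{Real.rpow\_natCast\_mul}, and passing through the casts. The plan is to prove the auxiliary fact $(\texttt{Nat.sqrt}\,n:\mathbb{R})\le\sqrt{n}$ once and apply it twice, the second time together with monotonicity of the square root. The remaining steps are routine monotonicity facts about \texttt{Real.rpow}.
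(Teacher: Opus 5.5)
Your proposal is correct and matches the paper's route. You derive the corollary directly from Theorem~\ref{thm:lindstrom-improved} by bounding both floors by $\sqrt N$ and $N^{1/4}$, absorbing the additive $2$ via $N^{1/4}\ge 2$ for $N\ge16$, and using monotonicity of $t\mapsto N^t$; this produces exactly the constants $C=2$, $N_0=16$ recorded for \texttt{sidonUpperBound\_quarter}.
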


\noindent This is formalized as \texttt{sidonUpperBound\_quarter}, with
$C=2$ and $N_0=16$.  It proves the upper-bound half of the conditional
Erd\H{o}s~30 upper-bound condition in the range $\varepsilon\ge1/4$; it does not by
itself improve the unconditional two-sided Erd\H{o}s~30 range, whose lower
side still comes from the Singer--Bertrand transfer below.

\subsection{Lower bound}

\begin{theorem}[Singer-via-Bertrand lower bound]
\label{thm:lower-bound}
For $N \ge 5$,
\[
  h(N) > \left\lfloor\frac{\lfloor\sqrt{N}\rfloor+1}{2}\right\rfloor .
\]
\end{theorem}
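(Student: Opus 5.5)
The plan is to exhibit, for each $N\ge5$, an interval Sidon set in $\{1,\dots,N\}$ of size $p+1$ for a suitable prime $p$. Then $h(N)\ge p+1$, and it suffices to choose $p$ with $p\ge \lfloor(\lfloor\sqrt N\rfloor+1)/2\rfloor$.

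Let $n=\lfloor\sqrt N\rfloor\ge2$ and $m=\lfloor(n+1)/2\rfloor\ge1$. By Bertrand's postulate (in \texttt{PrimeGapTransfer.lean}) there is a prime $p$ with $m\le p\le 2m$; if $m=1$ take $p=2$. Since $2m\le n+1$, we get $p\le n+1$. This is not yet enough to fit a Singer set, because $p^2+p+1$ can exceed $N$. So we refine the choice: apply Bertrand to $\lfloor n/2\rfloor$ instead, obtaining a prime $p$ with $\lfloor n/2\rfloor< p\le 2\lfloor n/2\rfloor\le n$. Then $p\ge\lfloor n/2\rfloor+1\ge\lfloor(n+1)/2\rfloor=m$. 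The case $n\in\{2,3\}$, where $\lfloor n/2\rfloor=1$, is handled directly with $p=2\le n$.

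Next apply Theorem~\ref{thm:singer-main} to get $S$ with $|S|=p+1$ that is Sidon modulo $M=p^2+p+1$. Feed it to the quantitative cyclic-gap transfer of \texttt{SidonGap.lean}, in its Singer specialization in \texttt{PrimePowerTransfer.lean}. This yields an interval Sidon set of size exactly $p+1$ in $\{1,\dots,N\}$ provided $N\ge p^2+p+2-\lfloor 3p/2\rfloor$. Since $p\le n$, we have $p^2\le n^2\le N$. We also have $p+2-\lfloor3p/2\rfloor\le 0$ for $p\ge 3$, and for $p=2$ it equals $1$. So for $p\ge3$ the threshold is at most $p^2\le N$. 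For $p=2$ the threshold is $5\le N$, which is exactly where the hypothesis $N\ge5$ enters. Then $h(N)\ge p+1>p\ge m$, giving the strict inequality.

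The main obstacle is the integer-rounding bookkeeping rather than any mathematics. Two places need care. The first is choosing the Bertrand argument so that both $p\le\lfloor\sqrt N\rfloor$ and $p\ge\lfloor(\lfloor\sqrt N\rfloor+1)/2\rfloor$ hold. I would state this as a standalone lemma ``$\forall n\ge2,\ \exists$ prime $p$, $\lfloor(n+1)/2\rfloor\le p\le n$'' and prove it by case split on $n\le3$. The second is verifying the natural-number threshold inequality, which I would dispatch with \texttt{omega}/\texttt{nlinarith} after case-splitting $p=2$ versus $p\ge3$. The bound is then closed using monotonicity and the definition of \texttt{sidonMaximum} as a maximum over interval Sidon sets.
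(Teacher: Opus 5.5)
Your route is the paper's: Bertrand's postulate, then Singer's theorem, then the Singer-parameter gap transfer with threshold $p^2+p+2-\lfloor 3p/2\rfloor$. The problem is a false claim in the threshold step. Since $\lfloor 3p/2\rfloor=p+\lfloor p/2\rfloor$, you have $p+2-\lfloor 3p/2\rfloor=2-\lfloor p/2\rfloor$, which is $\le 0$ only for $p\ge 4$. At $p=3$ it equals $1$, so the threshold is $10>9=p^2$, and your statement that for $p\ge3$ the threshold is at most $p^2$ is wrong.

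This matters given the interface you propose. Your standalone lemma records only $\lfloor (n+1)/2\rfloor\le p\le n$, and it admits $p=n=3$. With $N=9$ the goal $10\le N$ is then false. So the \texttt{omega}/\texttt{nlinarith} branch for $p\ge3$ cannot close from $p\le n$ and $n^2\le N$ alone.

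The gap is easily repaired, because your actual Bertrand choice never produces this configuration. For $n=3$ it yields $p=2$, and $p=3$ occurs only when $\lfloor n/2\rfloor=2$, i.e.\ $N\ge16$. To make the step robust, keep the sharper bound $p\le 2\lfloor n/2\rfloor$. For $n\ge3$ it gives $p\le n-1$: immediately for odd $n$, and for even $n\ge4$ because $n$ is composite. Combined with the uniform estimate $p^2+p+2-\lfloor 3p/2\rfloor\le p^2+1$, valid for every $p\ge2$, you get $p^2+1\le(n-1)^2+1\le n^2\le N$.

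The paper avoids the primality case analysis by applying Bertrand to $k=\lfloor(\lfloor\sqrt N\rfloor-1)/2\rfloor$. Then $2k+1\le\lfloor\sqrt N\rfloor$, so the chain $p^2+1\le 4k^2+1\le(2k+1)^2\le N$ is automatic, while $p>k$ still gives $p\ge\lfloor(\lfloor\sqrt N\rfloor+1)/2\rfloor$. That choice needs $k\ge1$, i.e.\ $N\ge9$, so the paper handles $5\le N<9$ separately with the explicit Sidon set $\{1,2\}$. Your uniform treatment of those small cases via $p=2$ (threshold $5$) is correct, and it shows exactly where the hypothesis $N\ge5$ enters.
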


The proof combines Singer's theorem with Bertrand's
postulate~\cite{Bertrand1845}.  Given~$N \ge 9$, set
$m=\lfloor\sqrt N\rfloor$ and $n=\lfloor(m-1)/2\rfloor$.  Bertrand's
postulate gives a prime \(p\) with \(n<p\le2n\).  Since \(2n+1\le m\), the
containment condition is satisfied by the chain
\[
  p^2+p+2-\lfloor 3p/2\rfloor
  \le p^2+1 \le 4n^2+1 \le (2n+1)^2 \le m^2 \le N .
\]
The cyclic window transfer then yields an interval Sidon set of size~$p+1$,
and \(p>n\) gives
\(\lfloor(\lfloor\sqrt N\rfloor+1)/2\rfloor < p+1\).  The cases
$5 \le N < 9$ are discharged by exhibiting an explicit small Sidon set, for
instance $\{1,2\}$.

\subsection{Combined bound and partial Erd\H{o}s 30}

\begin{theorem}[Two-sided bound]
\label{thm:two-sided}
For $N \ge 5$,
\[
  \left\lfloor\frac{\lfloor\sqrt{N}\rfloor+1}{2}\right\rfloor
  < h(N) \le \lfloor\sqrt{2N}\rfloor+1 \le \sqrt{2N}+1 .
\]
\end{theorem}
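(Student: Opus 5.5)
The two-sided bound is assembled from results already proved, so the plan is to chain three inequalities and handle the integer rounding in the middle one.

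\textbf{Lower half.} The strict inequality $\lfloor(\lfloor\sqrt N\rfloor+1)/2\rfloor < h(N)$ for $N\ge5$ is exactly Theorem~\ref{thm:lower-bound}. I would invoke it directly, with no further work.

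\textbf{Middle inequality.} For $h(N)\le\lfloor\sqrt{2N}\rfloor+1$ I would not pass through Theorem~\ref{thm:upper-bound} in real form. Instead I would rerun its natural-number argument.
\begin{itemize}
\item Let $A$ be a maximizing witness for $h(N)$, which exists by the definition of \texttt{sidonMaximum}, and set $m=|A|$.
\item The pair-difference count gives $m(m-1)/2\le N-1$.
\item If $m=0$ the inequality is trivial. Otherwise $(m-1)^2\le m(m-1)\le 2N-2<2N$.
\item Hence $m-1\le \lfloor\sqrt{2N}\rfloor$, since $\lfloor\sqrt{2N}\rfloor$ is the largest natural number whose square is at most $2N$. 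In Lean this is \texttt{Nat.le\_sqrt} applied to $(m-1)^2\le 2N$.
\end{itemize}
This yields $m\le\lfloor\sqrt{2N}\rfloor+1$ directly, with no real-number rounding.

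\textbf{Last inequality.} $\lfloor\sqrt{2N}\rfloor+1\le\sqrt{2N}+1$ follows from $\lfloor x\rfloor\le x$. Casting to reals, one has to identify the natural square root with the floor of the real square root, via \texttt{Real.nat\_sqrt\_le\_real\_sqrt} or an equivalent lemma.

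\textbf{Main obstacle.} Nothing here is conceptually hard; the only friction is the coercion bookkeeping. Specifically:
\begin{itemize}
\item matching the nested floors on the lower side exactly to the form in Theorem~\ref{thm:lower-bound};
\item making the natural-number floor of $\sqrt{2N}$ agree with the form used in the upper-bound lemma.
\end{itemize}
If the formalized upper bound is stated only in real form, I would reprove the pair-difference count in $\mathbb N$ rather than try to extract the floor version from the real statement.
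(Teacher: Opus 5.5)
Your proposal is correct and is essentially the paper's argument: the lower half is Theorem~\ref{thm:lower-bound} verbatim, and the upper half is the same pair-difference count $(m-1)^2\le m(m-1)\le 2(N-1)<2N$ behind Theorem~\ref{thm:upper-bound}, which you conclude in integer-square-root form before casting to $\mathbb{R}$. Your fallback of redoing the count in $\mathbb{N}$ is harmless but not needed: extracting the floor version from the real bound is immediate, since $h(N)-1$ is a nonnegative integer bounded by $\sqrt{2N}$.
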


\begin{corollary}[Partial Erd\H{o}s 30 for $\varepsilon \ge 1/2$]
\label{cor:partial}
For every $\varepsilon \ge 1/2$, we have
$|h(N) - \sqrt{N}| \le N^\varepsilon$ for all $N \ge 5$.
\end{corollary}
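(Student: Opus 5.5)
We will derive Corollary~\ref{cor:partial} directly from the two-sided bound of Theorem~\ref{thm:two-sided}. Since $N\ge5>1$, we have $N^\varepsilon\ge N^{1/2}=\sqrt N$ for every $\varepsilon\ge1/2$, so it suffices to prove $|h(N)-\sqrt N|\le\sqrt N$, i.e.\ the two inequalities $0\le h(N)\le 2\sqrt N$. The reduction from $N^\varepsilon$ to $\sqrt N$ is just monotonicity of $x\mapsto N^x$ for base $N\ge1$ (\texttt{Real.rpow\_le\_rpow\_of\_exponent\_le}), combined with $N^{1/2}=\sqrt N$ (\texttt{Real.sqrt\_eq\_rpow}).

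\textbf{Lower half.} We need $\sqrt N-h(N)\le\sqrt N$, i.e.\ $h(N)\ge0$. This holds trivially since $h(N)$ is a natural number cast to $\mathbb{R}$. We do not need the Singer--Bertrand lower bound at all for this range. (Positivity $h(N)\ge1$ would also suffice.)

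\textbf{Upper half.} We need $h(N)\le2\sqrt N$. From Theorem~\ref{thm:two-sided} (or Theorem~\ref{thm:upper-bound}) we have $h(N)\le\sqrt{2N}+1$. It therefore suffices to show
\[
\sqrt{2N}+1\le2\sqrt N,
\quad\text{i.e.}\quad
1\le(2-\sqrt2)\sqrt N .
\]
Since $2-\sqrt2>0.58$ and $\sqrt N\ge\sqrt5>2.2$, the product exceeds $1.2$, so the inequality holds for all $N\ge5$. In Lean I would phrase this as $\sqrt{2N}=\sqrt2\,\sqrt N$, then bound $\sqrt2\le 1.5$ (since $1.5^2\ge2$) and $\sqrt N\ge2$ (since $N\ge4$). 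This gives $\sqrt{2N}+1\le1.5\sqrt N+0.5\sqrt N=2\sqrt N$, a clean \texttt{nlinarith} goal.

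\textbf{Main obstacle.} The only real friction is real-number bookkeeping: casting $h(N)$ from $\mathbb{N}$, splitting the absolute value via \texttt{abs\_sub\_le\_iff}, and the rpow/sqrt conversion. None of this is mathematically deep, and we will supply the square-root estimates $\sqrt2\le3/2$ and $\sqrt N\ge2$ as explicit auxiliary facts so that the final step is linear arithmetic.
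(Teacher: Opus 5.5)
Your proof is correct and follows the paper's argument: reduce to $|h(N)-\sqrt N|\le\sqrt N$ using monotonicity of $N^\varepsilon$ in $\varepsilon$, get the lower side from $h(N)\ge0$, and get the upper side from $h(N)\le\sqrt{2N}+1\le2\sqrt N$ for $N\ge5$. The only cosmetic differences are that the paper uses the floor form $\lfloor\sqrt{2N}\rfloor+1$ of the upper bound and splits on the sign of $h(N)-\sqrt N$ rather than using \texttt{abs\_sub\_le\_iff}; neither changes the substance.
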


The proof shows $|h(N) - \sqrt{N}| \le \sqrt{N}$ by treating the two signs
separately: if \(h(N)\le\sqrt N\), this follows from the nonnegativity of
\(h(N)\); if \(h(N)\ge\sqrt N\), it follows from the upper bound
\(h(N)\le\lfloor\sqrt{2N}\rfloor+1\) and the elementary inequality
\(\lfloor\sqrt{2N}\rfloor+1\le2\sqrt N\) for \(N\ge5\).  The proof then uses
the monotonicity $\sqrt{N} = N^{1/2} \le N^\varepsilon$ for
$\varepsilon \ge 1/2$ and $N \ge 1$.
The stronger upper side from Corollary~\ref{cor:upper-quarter} is recorded
separately because the unconditional two-sided statement remains limited by
the available lower-bound exponent.

\section{Conditional Reduction}
\label{sec:conditional}

The formal statement of Erd\H{o}s Problem~30 is:

\begin{definition}[\texttt{Erdos30Statement}]
For all $\varepsilon \in \mathbb{R}$ with $\varepsilon > 0$, there exist
$C \ge 0$ and $N_0 \in \mathbb{N}$ such that for all $N \ge N_0$:
$|h(N) - \sqrt{N}| \le C \cdot N^\varepsilon$.
\end{definition}

We decompose this into two independent hypotheses:

\begin{itemize}[nosep]
  \item \textbf{Upper bound hypothesis} (\texttt{SidonUpperBoundHypothesis}):
    $\forall \varepsilon > 0,\ \exists C \ge 0,\ \exists N_0,\
     \forall N \ge N_0:\ h(N) \le \sqrt{N} + C \cdot N^\varepsilon$.
  \item \textbf{Lower bound hypothesis} (\texttt{SidonLowerBoundHypothesis}):
    $\forall \varepsilon > 0,\ \exists C \ge 0,\ \exists N_0,\
     \forall N \ge N_0:\ h(N) \ge \sqrt{N} - C \cdot N^\varepsilon$.
\end{itemize}

\noindent The subpolynomial prime gap hypothesis
(\texttt{Sub\-poly\-nomial\-Prime\-Gap\-Hypothesis}) asserts: for every
$\delta > 0$, there exists $x_0(\delta)$ such that for all natural numbers
$x \ge x_0(\delta)$, there exists a prime $p$ with $x - x^\delta < p \le x$.

\noindent Erd\H{o}s~30 follows from the conjunction of both hypotheses:
given $\varepsilon > 0$, the upper bound gives $h(N) \le \sqrt{N} +
C_1 N^\varepsilon$ and the lower bound gives $h(N) \ge \sqrt{N} -
C_2 N^\varepsilon$ for $N \ge N_0$; taking $C = \max(C_1, C_2)$ and the
larger~$N_0$ yields $|h(N) - \sqrt{N}| \le C \cdot N^\varepsilon$.

For the lower bound, we prove a conditional transfer (whose Lean signature
additionally carries \texttt{SingerFamilyHypothesis} for modularity, now
discharged by Theorem~\ref{thm:singer-main}):

\begin{theorem}[Conditional lower bound]
\label{thm:cond-lower}
\[
\mathrm{SingerFamily} + \mathrm{SubpolynomialPrimeGap}
 \implies \mathrm{SidonLowerBound}.
\]
\end{theorem}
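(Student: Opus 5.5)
Given $\varepsilon>0$, I will produce constants $C\ge0$ and $N_0$ with $h(N)\ge\sqrt N-CN^{\varepsilon}$ for all $N\ge N_0$. The route is Singer plus a well-chosen prime plus the full-size cyclic-gap transfer. Fix $\delta$ with $0<\delta\le\varepsilon$ (take $\delta=\varepsilon$, or $\min(\varepsilon,1/2)$ to keep the arithmetic simple) and apply \texttt{SubpolynomialPrimeGapHypothesis} with $\delta$ to get $x_0(\delta)$.

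For $N$ large, set $m=\lfloor\sqrt N\rfloor$ and $x=m-1$. Assuming $x\ge x_0(\delta)$, the hypothesis gives a prime $p$ with $x-x^{\delta}<p\le x$. Then:
\begin{enumerate}[nosep]
  \item Apply \texttt{SingerFamilyHypothesis} at $p$ to get $S$ with $|S|=p+1$ that is Sidon modulo $M=p^2+p+1$.
  \item Apply the quantitative gap theorem of \texttt{SidonGap.lean}, in its Singer-parameter specialization, to obtain an interval Sidon set of size $p+1$ in $\{1,\dots,N\}$. Its threshold is $N\ge p^2+p+2-\lfloor 3p/2\rfloor$.
  \item Verify that threshold. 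Since $p^2+p+2-\lfloor3p/2\rfloor\le p^2+1$ for $p\ge2$, and $p\le m-1$, we get $p^2+1\le (m-1)^2+1\le m^2\le N$. This is the same chain used in the proof of Theorem~\ref{thm:lower-bound}.
  \item Conclude $h(N)\ge p+1$ from the definition of \texttt{sidonMaximum} as a maximum.
\end{enumerate}

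Next compare $p+1$ with $\sqrt N$. Since $p> x-x^{\delta}=m-1-(m-1)^{\delta}$, we get $p+1> m-(m-1)^{\delta}\ge \sqrt N-1-m^{\delta}$. We also have $m^{\delta}\le (\sqrt N)^{\delta}=N^{\delta/2}\le N^{\varepsilon}$ for $N\ge1$, and $1\le N^{\varepsilon}$. Together these give $h(N)\ge \sqrt N-2N^{\varepsilon}$, so $C=2$ works.

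For $N_0$, it suffices that $\lfloor\sqrt N\rfloor-1\ge \max(x_0(\delta),2)$. Take $N_0=(x_0+3)^2$ (as natural numbers) to guarantee this, and use monotonicity of the floor square root.

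The main obstacle is the real/natural-number interface in this final comparison. The gap hypothesis is stated with a real power $x^\delta$ for natural $x$, the transfer threshold uses natural-number floor division, and the target uses $\sqrt N$ and $N^\varepsilon$ as reals. I would first isolate a lemma: $\lfloor\sqrt N\rfloor\ge\sqrt N-1$ (via \texttt{Nat.sqrt} and \texttt{Real.sqrt} bounds). I would then separately prove $(\lfloor\sqrt N\rfloor-1)^{\delta}\le N^{\varepsilon}$ using \texttt{Real.rpow\_le\_rpow} monotonicity in the base, and \texttt{Real.rpow\_le\_rpow\_of\_exponent\_le} with $N\ge1$. Handled this way, the casts of the natural subtraction $m-1$ can be discharged using $m\ge1$.
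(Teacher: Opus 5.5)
Your proof is correct and follows the paper's route. You take a gap prime near $\sqrt N$, apply Singer to get a set of size $p+1$, use the full-size cyclic-gap transfer with threshold $p^2+p+2-\lfloor 3p/2\rfloor$, and absorb $N^{\delta/2}\le N^{\varepsilon}$ and the additive constant into $CN^{\varepsilon}$. The one small difference is that you apply the gap hypothesis at $\lfloor\sqrt N\rfloor-1$ instead of the paper's $\lfloor\sqrt N\rfloor$: your threshold then clears via $p^2+1\le(m-1)^2+1\le m^2$ for every prime, rather than needing $p\ge4$ to push it below $p^2$, at the cost of one extra unit of error, and this lets you fix $C=2$ explicitly.
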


\begin{proof}[Proof sketch]
Given $\varepsilon > 0$, set $\delta = \varepsilon$.  For sufficiently
large~$N$, apply the subpolynomial gap hypothesis to
\(x=\lfloor\sqrt N\rfloor\).  It provides a prime \(p\le x\le\sqrt N\) with
\[
  p>x-x^\delta\ge \sqrt N-1-N^{\delta/2}.
\]
After increasing the final constant and threshold, this gives
\(p\ge\sqrt N-CN^{\varepsilon/2}\).  Singer's theorem gives a Sidon set of size $p+1$
modulo $p^2+p+1$.  The cyclic window transfer produces an interval Sidon
set in $\{1,\dots,N\}$ of size $p+1$ once the threshold
\[
  p^2+p+2-\lfloor 3p/2\rfloor \le N
\]
is satisfied.  The arithmetic point is elementary but important: because
\[
  \lfloor 3p/2\rfloor = p+\lfloor p/2\rfloor,
  \qquad
  p^2+p+2-\lfloor 3p/2\rfloor
    = p^2+2-\lfloor p/2\rfloor,
\]
the threshold is at most \(p^2\) once \(p\ge4\).  The displayed prime-gap
lower bound makes \(p\) eventually at least \(4\), and \(p\le\sqrt N\) gives
\(p^2\le N\).  Hence the transfer applies for all sufficiently large \(N\),
giving \(h(N) \ge p+1 \ge \sqrt{N} - C' N^{\varepsilon/2}\).  Since
\(N^{\varepsilon/2}\le N^\varepsilon\) for \(N\ge1\), increasing the constant
once more gives \(h(N)\ge \sqrt N-C\cdot N^\varepsilon\) for
\(N\ge N_0(\varepsilon)\).
\end{proof}

Since we now discharge $\mathrm{SingerFamily}$ unconditionally
(Theorem~\ref{thm:singer-main}), the full conditional result simplifies:

\begin{corollary}[Conditional Erd\H{o}s 30]
\label{cor:cond-erdos30}
\begin{multline*}
\mathrm{SubpolynomialPrimeGap} + \mathrm{SidonUpperBound} \\
\implies \mathrm{Erdos30Statement}.
\end{multline*}
\end{corollary}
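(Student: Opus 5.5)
The corollary is a short assembly step: Theorem~\ref{thm:cond-lower} supplies the lower bound and the hypothesis \texttt{SidonUpperBound} supplies the upper bound. Assume \texttt{SubpolynomialPrimeGap} and \texttt{SidonUpperBound}. Theorem~\ref{thm:singer-main} discharges \texttt{SingerFamilyHypothesis} unconditionally, so the extra argument carried by the Lean signature of Theorem~\ref{thm:cond-lower} can be filled in. Applying Theorem~\ref{thm:cond-lower} then yields \texttt{SidonLowerBoundHypothesis}. It remains to show that the upper and lower hypotheses together imply \texttt{Erdos30Statement}, following the decomposition described before Theorem~\ref{thm:cond-lower}.

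Fix $\varepsilon>0$. From the upper hypothesis obtain $C_1\ge0$ and $N_1$ such that $h(N)\le\sqrt N+C_1N^\varepsilon$ for all $N\ge N_1$. From the lower hypothesis obtain $C_2\ge0$ and $N_2$ such that $h(N)\ge\sqrt N-C_2N^\varepsilon$ for all $N\ge N_2$. Set $C=\max(C_1,C_2)$, which is nonnegative, and $N_0=\max(N_1,N_2)$. For $N\ge N_0$, both bounds hold. Since $N^\varepsilon\ge0$ (real rpow of a nonnegative base), we have $C_iN^\varepsilon\le CN^\varepsilon$ for $i=1,2$. Hence
\[
  -CN^\varepsilon\le h(N)-\sqrt N\le CN^\varepsilon,
\]
and the characterization $|a|\le b\iff -b\le a\le b$ gives $|h(N)-\sqrt N|\le C\cdot N^\varepsilon$.

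None of these steps is a real obstacle, since all the substantive content lives in Theorem~\ref{thm:cond-lower}. The only place requiring care is bookkeeping: $h(N)$ is a natural number and must be cast to $\mathbb{R}$ consistently across the three statements. The monotonicity $C_iN^\varepsilon\le CN^\varepsilon$ also needs the nonnegativity of $N^\varepsilon$ to be supplied explicitly. I would handle both with \texttt{le\_max\_left}/\texttt{le\_max\_right}, \texttt{mul\_le\_mul\_of\_nonneg\_right}, \texttt{Real.rpow\_nonneg}, and close with \texttt{abs\_le.mpr} and \texttt{linarith}.
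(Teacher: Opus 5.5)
Your proposal is correct and follows the paper's route: discharge \texttt{SingerFamilyHypothesis} via Theorem~\ref{thm:singer-main}, apply Theorem~\ref{thm:cond-lower} to get the lower-bound hypothesis, then combine it with the upper-bound hypothesis using $C=\max(C_1,C_2)$ and the larger threshold. This is exactly what \texttt{erdos30\_of\_conditional} and \texttt{erdos30\_of\_upper\_and\_lower} do in the Lean development.
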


\begin{lstlisting}
theorem erdos30_of_gap_and_upper
    (hGap   : SubpolynomialPrimeGapHypothesis)
    (hUpper : SidonUpperBoundHypothesis) :
    Erdos30Statement :=
  erdos30_of_conditional singerFamilyHypothesis_holds hGap hUpper
\end{lstlisting}

\begin{remark}
The underlying modular theorem retains \texttt{hSinger} as an explicit
parameter, so it remains usable with alternative Singer-type constructions:
\end{remark}

\begin{lstlisting}
theorem erdos30_of_conditional
    (hSinger : SingerFamilyHypothesis)
    (hGap : SubpolynomialPrimeGapHypothesis)
    (hUpper : SidonUpperBoundHypothesis) :
    Erdos30Statement :=
  erdos30_of_upper_and_lower hUpper
    (sidonLowerBound_of_singer_and_gap hSinger hGap)
\end{lstlisting}

\section{Mathematical Lessons from Formalization}
\label{sec:lessons}

\subsection{The projective-line representation was forced by formalization}

In the informal Singer argument, one says ``the $q+1$ lines through the origin
in the 2-dimensional trace kernel give $q+1$ cosets.''  Formalizing this
required constructing \emph{explicit representatives}: a function
$\mathrm{Option}(\mathrm{GF}(p)) \to \ker(\mathrm{Tr}) \setminus \{0\}$
mapping each projective coordinate to a specific nonzero kernel element.

The choice of basis is arbitrary but must be \emph{fixed}: the entire injectivity
argument (showing distinct indices map to distinct cosets) depends on the explicit
form of the representatives in terms of a chosen kernel basis.  This is invisible
in informal proofs but becomes a nontrivial formal proof component.

\subsection{The modular Sidon predicate carries the field-to-integer transfer}

A recurring design tension in additive combinatorics formalization is whether to
work with sets in~$\mathbb{Z}$ or in~$\mathbb{Z}/M\mathbb{Z}$.  Our experience
strongly favors \texttt{IsSidonMod M A} (with $A \subseteq \mathbb{Z}$) over
direct $\mathbb{Z}/M\mathbb{Z}$ sets, because:
\begin{itemize}[nosep]
  \item The Singer quotient-group argument naturally produces an
    $\mathbb{Z}/(p^2+p+1)\mathbb{Z}$-indexed result, expressible via divisibility.
  \item The interval Sidon target uses subsets of~$\mathbb{Z}$.
  \item The modular predicate connects both without changing the carrier type.
\end{itemize}

The direction $\texttt{IsSidonMod}\implies\texttt{IsSidon}$ is trivially
one-line (every literal collision $a+b=c+d$ also satisfies $M \mid 0$).
The non-trivial direction is the converse: the no-wraparound lemma shows
that an interval Sidon set in $\{1,\dots,N\}$ with $M \ge 2N-1$ is also
Sidon modulo~$M$, since pairwise-sum differences lie in $\{-(2N-2),\dots,2N-2\}
\subset (-M,M)$, making $M$-divisibility equivalent to zero.
This factoring into ``construction $\to$ modular $\to$ interval''
is cleaner than any single-step argument we attempted.

\subsection{The non-invariance argument requires the full irreducibility theory}

Theorem~\ref{thm:no-invariant} (no proper invariant subspace) might appear to be
a simple linear algebra fact, but its formalization reveals a substantive
mathematical dependency: one needs that the minimal polynomial of
$\alpha \notin \mathrm{GF}(p)$ has degree~3 (irreducibility over the base
field), which in turn requires the intermediate field characterization of
algebraic elements in Mathlib.

The alternative proof strategy---directly showing $V \ne \alpha V$ by trace
computations---turned out to be \emph{longer} in Lean than the general
invariant-subspace argument, because trace manipulation requires unfolding the
Galois field representation.

\subsection{The extremal function is defined non-constructively}

The definition $h(N) = \max\{|A| : A \subseteq \{1,\dots,N\} \text{ is Sidon}\}$
requires an existence argument for the maximum.
We define \texttt{sidonMaximum} via \texttt{Classical.choose}, making it
\texttt{noncomputable} in Lean.  This forces a clean separation: properties
of $h(N)$ are proved via bounds and transfer theorems, never by evaluation.
This mirrors the mathematical practice: known asymptotic results about~$h(N)$
use existence arguments (Singer, Bose--Chowla~\cite{BoseChowla1962}) and
counting bounds, with no evaluation of optimal sets for general~$N$.

\subsection{The conditional theorem factors into three hypotheses}

By factoring Erd\H{o}s~30 into
\texttt{Sidon\-Upper\-Bound} $+$ \texttt{Sidon\-Lower\-Bound}, and further
decomposing the lower bound into \texttt{Singer\-Family} $+$
\texttt{Sub\-polynomial\-Prime\-Gap}, the formal development achieves a
clear separation:
\begin{itemize}[nosep]
  \item Singer family: pure algebra (now discharged).
  \item Prime gap transfer: analytic number theory (open hypothesis).
  \item Upper bound: formalized unconditionally for $\varepsilon\ge1/4$;
    still a hypothesis for the full range $\varepsilon>0$.
  \item Assembly: the triangle inequality.
\end{itemize}

The formal development records this decomposition as named hypotheses and
theorem statements for the three components.

\subsection{Floor-rounding carries mathematical content}

The Singer-via-Bertrand lower bound (Theorem~\ref{thm:lower-bound})
is stated in the integer-rounded form
$h(N) > \lfloor(\lfloor\sqrt{N}\rfloor+1)/2\rfloor$; the informal
$h(N) \ge \sqrt{N}/2$ serves as asymptotic shorthand.  The distinction is
mathematical: the difference between \(\lfloor\sqrt N\rfloor\) and \(\sqrt N\)
interacts with the floor-form transfer threshold
\(p^2+p+2-\lfloor 3p/2\rfloor\) and with the integrality of Sidon-set
cardinalities, and the integer-rounded statement is what one actually proves.
Likewise, the shift-incidence upper bound (Theorem~\ref{thm:lindstrom-improved})
records the exact expression
\(\lfloor\sqrt{N}\rfloor+\lfloor\sqrt{\lfloor\sqrt{N}\rfloor}\rfloor+2\);
the asymptotic \(\sqrt{N}+N^{1/4}+1\) describes its leading scale.
Formalization makes these floor steps unavoidable, and the finite Lean
statements record exactly which mathematical inequality is being proved at each
stage.  The accompanying type transitions
($\mathbb{N}\to\mathbb{Z}\to\mathbb{R}$ and the cyclic-group/finite-set
type transitions) are then carried by routine compatibility lemmas.

\section{Related Work}
\label{sec:related}

\subsection{Formalized additive combinatorics}

The Polynomial Freiman--Ruzsa theorem is available with a Lean~4
formalization repository~\cite{PFR2024}, addressing approximate group theory.
The present development packages the Sidon-specific infrastructure needed here:
Sidon predicates in intervals and cyclic groups, the extremal function~\(h(N)\),
modular-to-interval transfer lemmas, and sumset, difference-set, and
representation lemmas used by the formalized Singer construction.

Adjacent Lean resources include finite Sidon/perfect-difference-set
counterexample formalizations associated with Alexeev--Mixon~\cite{AlexeevMixon2026}
and Ho's formalized use of Singer difference sets in a high-dimensional
distance construction~\cite{Ho2026} (repository file
\texttt{DiameterConstruction/Singer.lean}, commit
\texttt{c053b2580e0cf7e74c8d905f96feb101061612a3}).
The Google DeepMind \texttt{formal-conjectures} statement files
(\url{https://github.com/google-deepmind/formal-conjectures}, accessed
2026-05-03) contain adjacent Sidon/perfect-difference-set problem statements.
They are included for orientation.  The theorem-indexed package here formalizes
Singer constructions through prime powers, interval/modular transfer,
\(h(N)\) bounds, and conditional Erd\H{o}s~30 reductions.

\subsection{Sidon sets and computation}

Alexeev and Mixon~\cite{AlexeevMixon2026} disproved Erd\H{o}s's 1976
conjecture that every finite Sidon set extends to a finite perfect
difference set, exhibiting $\{1, 2, 4, 8, 13\}$ as a counterexample
and rediscovering an earlier counterexample of Hall.  They used a large
language model to generate a Lean~4 proof of both counterexamples.  Their
contribution verifies a concrete finite obstruction to a proposed extension;
the present paper develops algebraic infrastructure for the asymptotic theory.
The two approaches are complementary: finite counterexample verification and
algebraic formalization address different parts of the Sidon-set landscape.

Ho~\cite{Ho2026} gives a Lean~4 formalization of a counterexample to
Erd\H{o}s's diameter conjecture for separated distances.  That development
includes a formalized Singer difference-set input for prime powers as a
black-box ingredient of a Euclidean distance construction; the diameter
construction does not depend on either an interval transfer or any
\(h(N)\) infrastructure.  The targets are therefore complementary: Ho's
development uses Singer difference sets as a black-box input for a
diameter-conjecture counterexample, whereas the present formalization develops
the field-to-integer transfer (trace-kernel proof chain, projective-line
representatives, modular-to-interval Sidon transfer with the explicit threshold
\(q^2+q+2-\lfloor3q/2\rfloor\), and the extremal-function infrastructure for
\(h(N)\)) that Erd\H{o}s Problem~30 specifically requires.

\subsection{AFM context}

The first volume of the Annals of Formalized Mathematics~\cite{Brasca2025,%
Loeffler2025,Mercuri2025} includes formalizations of Fermat's Last Theorem for
regular primes, zeta and $L$-functions, and adele ring compactness.  Our
contribution occupies a complementary niche: it develops foundational
infrastructure in additive combinatorics and makes explicit the links between
algebraic constructions and combinatorial interval statements.

\section{Conclusion}
\label{sec:conclusion}

We have presented a Lean~4 formalization of Singer's Sidon set construction
through prime powers, together with a Sidon set library and conditional
Erd\H{o}s~30 reduction.  The development demonstrates that:
\begin{enumerate}[nosep]
  \item Classical algebraic constructions require explicit handling of
    projective representatives, modular-to-interval transfers, and type
    transitions that informal proofs routinely elide.
  \item Formalization makes the scope explicit: the discharged Singer-family
    input, the still-open gap and upper-bound hypotheses for Erd\H{o}s~30, and
    the exact logical structure of the conditional reduction are all made
    explicit.
  \item The modular Sidon predicate is a natural formalization-friendly
    intermediate between algebraic and combinatorial viewpoints.
\end{enumerate}
Thus the submission contributes checked Singer constructions through prime powers,
modular-to-interval Sidon transfer, formal \(h(N)\) bounds, and a reusable
theorem-indexed Lean artifact for future additive-combinatorics formalization.

\subsection{Future work}
\label{sec:future}

\begin{enumerate}[nosep]
  \item \textbf{Baker--Harman--Pintz}: formalize prime gap results to
    unconditionally discharge the lower bound for $\varepsilon \ge 0.2625$;
    this requires substantial analytic number theory infrastructure
    (sieve methods, zero-density estimates, exponential sum techniques)
    not yet in Mathlib.
  \item \textbf{Fourier and higher-order representation methods}: extend the
    finite representation-function theory into a cyclic Fourier API
    (Parseval, convolution, and $L^4$ energy) and use it to prove standalone
    Sidon estimates.
\end{enumerate}

\subsection*{Artifact}

The complete Lean~4 source code is available under the GPL-3.0-only license:
\begin{quote}\small
Repository: \url{https://github.com/d0d1/singer-theorem-lean}\\
Commit: \texttt{0c890589afc58e8955a5\allowbreak{}d7c3a609daff6447da31}\\
Branch: \texttt{main}
\end{quote}
At this commit, the artifact root contains \texttt{LICENSE}, and the
    \texttt{README.md} license section declares GNU GPL~v3.0 only; the verification
script below checks both before running the placeholder and line-count audits.

\smallskip\noindent\textbf{Build and verification instructions.}
For the manuscript-facing audit, the shortest path is: check out the listed
artifact commit, obtain cached Mathlib oleans with \texttt{lake exe cache get},
type-check the cited theorem surfaces, then run the two paper-bundle scripts
\texttt{verify-artifact.sh} and \texttt{verify-axioms.sh}.
The numbered list below gives the full artifact reproduction path; the command
block following it records the narrower manuscript audit path used for this
paper.
\begin{enumerate}[nosep]
  \item Clone the repository and check out the above commit.
  \item Install \texttt{elan}; the bundled \texttt{lean-toolchain} file
    selects Lean~4 v4.29.0 automatically on first invocation.
  \item Run \texttt{lake exe cache get} (downloads precompiled
    Mathlib oleans matching the pinned commit) followed by
    \texttt{lake build}.  Do \emph{not} run \texttt{lake update}, which
    would mutate \texttt{lake-manifest.json} away from the pinned Mathlib
    commit.
  \item Verify the artifact license metadata, absence of active placeholders,
    and listed-file \texttt{native\_decide} uses: search all \texttt{.lean} files under
    \texttt{Erdos30/} for
    \texttt{sorry}, \texttt{admit}, or new \texttt{axiom} declarations, and
    search the Table~\ref{tab:modules} listed modules for \texttt{native\_decide}.
    At the cited commit, the artifact-audit script accepts a single documented
    non-executable comment hit in \texttt{FinalTheoremSurface.lean}.  The paper
    repository includes the exact script next to this manuscript as
    \texttt{verify-artifact.sh}; it exits with code~0 after printing
    the placeholder-search, \texttt{native\_decide}-search, and line-count
    summaries, and exits nonzero on any unexpected hit.
  \item Verify axioms using the authoritative declaration list in the
    axiom-check file.  For each listed declaration, run
    \texttt{\#print axioms \ensuremath{\langle}decl\ensuremath{\rangle}} in Lean; the
    expected axiom set consists of \texttt{propext}, \texttt{Quot.sound}, and
    \texttt{Classical.choice}.  The paper repository includes
    \texttt{AxiomCheck.lean} and
    \texttt{verify-axioms.sh} next to this manuscript for this check; on success the script
    prints \texttt{All 34 axiom reports are within ...} and exits with code~0.
\end{enumerate}

\noindent The local verification for this paper used the following commands at
commit \texttt{0c890589afc58e8955a5d7c3a609daff6447da31}.  In this command block,
\texttt{/path/to/erdos30-paper} denotes the checked-out paper-workspace
root.  The three copied files are distributed with the AFM paper workspace,
not assumed to be present in the Lean artifact checkout; copy them to the
artifact root before running the checks.  By default the scripts write
temporary files under \texttt{.artifact-check/}; set
\texttt{ARTIFACT\_CHECK\_TMPDIR} to choose another scratch directory.  The build command below
type-checks the cited conditional and unconditional theorem surfaces and then
runs the artifact metadata and placeholder-audit script.  That script searches all Lean
files under \texttt{Erdos30/}; its sole raw hit at the cited commit is a comment
in \texttt{FinalTheoremSurface.lean}; it also verifies that no
Table~\ref{tab:modules} listed module uses \texttt{native\_decide} and computes
the table's line counts from the listed files.
\begin{lstlisting}
lake exe cache get
lake build Erdos30.SingerPrimePowerTheorem
lake build Erdos30.SingerPrimePowerSmokeTest
lake build Erdos30.ConditionalErdos30
lake build Erdos30.UnconditionalBounds
cp /path/to/erdos30-paper/papers/afm-singer-sidon/manuscript/verify-artifact.sh .
bash verify-artifact.sh
cp /path/to/erdos30-paper/papers/afm-singer-sidon/manuscript/AxiomCheck.lean .
cp /path/to/erdos30-paper/papers/afm-singer-sidon/manuscript/verify-axioms.sh .
bash verify-axioms.sh
\end{lstlisting}

\clearpage
\smallskip\noindent\textbf{Theorem index.}
The main results correspond to the following Lean declarations
(all in namespace \texttt{Erdos30} or its subnamespaces); the module column gives the file
under \texttt{Erdos30/}.

\begin{table}[H]
\caption{Main theorem declarations cited by the manuscript.}
\label{tab:theorem-index}
\centering\footnotesize
\setlength{\tabcolsep}{3pt}
\begin{tabular}{@{}>{\raggedright\arraybackslash}p{0.22\textwidth}
                  >{\raggedright\arraybackslash}p{0.42\textwidth}
                  >{\raggedright\arraybackslash}p{0.30\textwidth}@{}}
\toprule
Result & Lean declaration & Module \\
\midrule
Thm.~\ref{thm:finrank-ker}     & \texttt{Singer.finrank\_ker\_trace}              & \texttt{Singer.lean} \\
Thm.~\ref{thm:no-invariant}    & \LeanBreakTwo{Singer.no\_proper\_}{invariant\_subspace}  & \texttt{Singer.lean} \\
Thm.~\ref{thm:intersection}    & \LeanBreakTwo{Singer.finrank\_inf\_of\_}{distinct\_twodim} & \texttt{Singer.lean} \\
Thm.~\ref{thm:singer-sidon}    & \texttt{Singer.singer\_quotient\_sidon}          & \texttt{SingerSidon.lean} \\
Thm.~\ref{thm:singer-main}     & \texttt{singerFamilyHypothesis\_holds}           & \texttt{SingerTheorem.lean} \\
Thm.~\ref{thm:singer-prime-power} & \LeanBreakTwo{singerPrimePowerFamily}{Hypothesis\_holds} & \LeanBreakTwo{SingerPrimePower}{Theorem.lean} \\
Prime-power smoke test          & \LeanBreakTwo{SingerPrimePower.}{singer\_primePower\_smoke\_q4} & \LeanBreakTwo{SingerPrimePower}{SmokeTest.lean} \\
Cyclic window transfer          & \texttt{IsSidonMod.windowSidon}                  & \texttt{CyclicWindow.lean} \\
Window averaging                & \LeanBreakTwo{IsSidonMod.exists\_large\_}{intervalSidon} & \texttt{Averaging.lean} \\
Quantitative full transfer      & \LeanBreakThree{IsSidonMod.exists\_full\_}{intervalSidon\_of\_quantitative\_}{gap\_bound} & \texttt{SidonGap.lean} \\
Singer full transfer            & \LeanBreakThree{IsSidonMod.exists\_full\_}{intervalSidon\_of\_singer\_}{parameters\_clean} & \LeanBreakTwo{PrimePower}{Transfer.lean} \\
Bertrand transfer               & \LeanBreakThree{exists\_intervalSidon\_card\_}{gt\_sqrt\_succ\_div\_two\_}{of\_bertrand} & \texttt{PrimeGapTransfer.lean} \\
Thm.~\ref{thm:upper-bound}     & \texttt{sidonMaximum\_le\_sqrt\_two}             & \texttt{Lindstrom.lean} \\
Thm.~\ref{thm:lindstrom-cross} & \LeanBreakTwo{IsIntervalSidon.}{lindstrom\_cross\_ineq}  & \texttt{Lindstrom.lean} \\
Thm.~\ref{thm:lindstrom-improved} & \texttt{sidonMaximum\_le\_lindstrom}           & \texttt{LindstromImproved.lean} \\
Cor.~\ref{cor:upper-quarter}   & \texttt{sidonUpperBound\_quarter}               & \LeanBreakTwo{Unconditional}{Bounds.lean} \\
No-wraparound transfer         & \texttt{IsSidon.isSidonMod\_of\_interval}       & \texttt{Interval.lean} \\
\bottomrule
\end{tabular}
\end{table}

\begin{table}[H]
\caption{Additional Sidon infrastructure and reduction declarations cited by the manuscript.}
\label{tab:theorem-index-infrastructure}
\centering\footnotesize
\setlength{\tabcolsep}{3pt}
\begin{tabular}{@{}>{\raggedright\arraybackslash}p{0.22\textwidth}
                  >{\raggedright\arraybackslash}p{0.42\textwidth}
                  >{\raggedright\arraybackslash}p{0.30\textwidth}@{}}
\toprule
Result & Lean declaration & Module \\
\midrule
Sumset cardinality             & \texttt{IsSidon.card\_add}                       & \texttt{SumsetCard.lean} \\
Sidon characterization         & \texttt{isSidon\_iff\_card\_add}                 & \LeanBreakTwo{Sidon}{Characterization.lean} \\
Difference-set size            & \texttt{IsSidon.card\_sub}                       & \texttt{DifferenceSet.lean} \\
Strict sum/difference comparison & \texttt{IsSidon.card\_sub\_gt\_card\_add}      & \texttt{SumDiffComparison.lean} \\
Additive energy                & \texttt{IsSidon.addEnergy\_eq}                   & \texttt{AdditiveEnergy.lean} \\
Sum/difference comparison      & \texttt{IsSidon.card\_sub\_ge\_card\_add}        & \texttt{SumDiffComparison.lean} \\
Shift intersection             & \texttt{IsSidon.shift\_inter\_le\_one}           & \texttt{LindstromImproved.lean} \\
Representation upper bound     & \texttt{IsSidon.repr\_le\_two}                  & \LeanBreakTwo{Representation}{Function.lean} \\
Representation L$^2$ identity  & \texttt{IsSidon.sum\_addConvolution\_sq}        & \texttt{RepresentationL2.lean} \\
Representation deficiency      & \texttt{IsSidon.sum\_deficiency}                & \texttt{RepresentationL2.lean} \\
Thm.~\ref{thm:lower-bound}     & \texttt{sidonMaximum\_gt\_sqrt\_div\_two}        & \LeanBreakTwo{Unconditional}{Bounds.lean} \\
Thm.~\ref{thm:two-sided}       & \texttt{sidonMaximum\_bounds}                    & \LeanBreakTwo{Unconditional}{Bounds.lean} \\
Cor.~\ref{cor:partial}         & \texttt{erdos30\_partial\_half\_tight}           & \LeanBreakTwo{Unconditional}{Bounds.lean} \\
Thm.~\ref{thm:cond-lower}      & \LeanBreakTwo{sidonLowerBound\_of\_}{singer\_and\_gap}   & \LeanBreakTwo{Conditional}{LowerBound.lean} \\
Cor.~\ref{cor:cond-erdos30}    & \texttt{erdos30\_of\_gap\_and\_upper}            & \LeanBreakTwo{Conditional}{Erdos30.lean} \\
Prime-power lower-bound transfer & \LeanBreakThree{sidonLowerBoundHypothesis\_}{of\_singerPrimePower\_and\_}{subpolynomialGap} & \LeanBreakTwo{PrimePower}{LowerBound.lean} \\
\bottomrule
\end{tabular}
\end{table}

\smallskip\noindent
A Software Heritage persistent identifier (SWHID) will be obtained upon
acceptance.

\subsection*{AI-use disclosure}

Generative AI tools were used to assist with manuscript drafting and editing.
The authors reviewed and edited the text and are responsible for the
manuscript's content, mathematical claims, and verification evidence.  All Lean
terms cited in the artifact are checked by Lean at the stated commit; AI
assistance does not substitute for kernel verification.

\subsection*{Acknowledgements}

We thank the Mathlib community for the foundational library infrastructure,
particularly the finite field, linear algebra, and group theory components.

\end{document}